\numberwithin{equation}{section}
\newcommand{\bs}{\boldsymbol} 
\newcommand{\indep}{\rotatebox[origin=c]{90}{$\models$}}
\newcommand{\E}{\mathrm{E}}
\newcommand{\var}{\mathrm{Var}}
\newcommand{\m}{\text{-}}
\newtheorem{theorem}{Theorem}[section]
\newtheorem{corollary}{Corollary}[theorem]
\begin{document}
\setlength{\belowdisplayskip}{0pt} \setlength{\belowdisplayshortskip}{0pt}
\setlength{\abovedisplayskip}{0pt} \setlength{\abovedisplayshortskip}{0pt}

\begin{center}
\textbf{Inference for partial correlation when data are missing not at random}\\
Tetiana Gorbach*\footnote{Corresponding author email: tetiana.gorbach@umu.se}, Xavier de Luna*\\
*Department of Statistics, USBE, Ume\aa \  University, SE-90187, Ume\aa, Sweden
\end{center}

\textbf{Abstract}
We introduce uncertainty regions to perform inference on partial correlations when data are missing not at random. These uncertainty regions are shown to have a desired asymptotic coverage. Their finite sample performance is illustrated via simulations and real data example.

\textbf{Keywords.}
Nonignorable dropout; uncertainty region; change - change analysis; brain markers;  cognition.

\section{Introduction}
This paper proposes methods to perform inference on partial correlations when data are missing not at random. The motivation for this work comes from a recent investigation of the relationship between longitudinal changes in brain structure, e.g. gray matter volume of hippocampus, and changes in cognition, e.g. episodic memory, when adjusting for the effect of age and hypertension, see \citet{Gorbach2017167}.
A partial correlation coefficient may be used to describe an association between two random variables, such as changes in brain and cognition, that is not due to other related covariates, for example age and hypertension (see \citet{anderson1958introduction} for theory and \citet{Nilsson1997theBetula}, \citet{marellec2006228}, \citet{vanpetten2004memory} for application).
However, a natural feature of most longitudinal investigations is the occurrence of missing data
due to dropout. In \citet{Gorbach2017167}, for example, measures of the episodic memory change could be obtained for each individual, while  41\%
of data on the gray matter volume changes was missing due to dropout. Moreover, individuals with more pronounced health and brain deterioration are expected to drop out from longitudinal investigations earlier than healthier subjects. Thus the probability of an observation to be missing is expected to depend on its unobserved value (data missing not at random).  

Some work has been devoted to inference on partial correlation when data are missing. \citet{dAngelo2012missing}, for example, considered an EM algorithm and multiple imputation for the case of trivariate normal distribution with data missing at random. \cite{Gorbach2017167} inferred on statistical significance of partial correlation allowing for data missing not at random. This was done using the relationship between the partial correlation and a regression parameter in combination with results developed by \citet{Genback2015}. This approach does not allow, however, to construct uncertainty regions for partial correlations but only to perform significance testing.  

In this paper we consider the situation when the data is missing not at random. To model the dependency between variables (which are not observed for all individuals) and the probability that their observations are missing, we introduce a parameter $\gamma$ which is typically unknown in applications.  We then construct confidence intervals (with given coverage (1-$\alpha$)100\%) for a partial correlation of interest for each plausible value of the parameter $\gamma$ based on asymptotic results. We then propose to use the union of these confidence intervals, which we call uncertainty region, and prove that this region has at least (1-$\alpha$)100\% coverage asymptotically.

This paper is organized as follows. Partial correlation and its relation to a regression parameter are briefly introduced in \autoref{Partial correlation section}. The missing data mechanisms we consider are described in \autoref{Missing data mechanisms section}. \autoref{Inference section} presents our results and introduces uncertainty regions for each missing data mechanisms, while \autoref{Application section} illustrates the application of the method to the aforementioned longitudinal study of the relation between changes in brain structure and cognition. A simulation study is conducted in \autoref{Simulation study section}, followed by concluding remarks in \autoref{Discussion section}. 

\section{Partial correlation}
\label{Partial correlation section}
The partial correlation between random variables $X_1$ and $X_2$ while adjusting for $X_3, \ldots, X_p$ is defined as the correlation between residuals of the projections of $X_1$ and $X_2$ on the linear space spanned by $X_3, \ldots, X_p$. 
Let $X_1, \ldots, X_p$  be random variables with finite second moments, $EX_j^{2}<~\infty,$ $j=1,\ldots,p$,
and consider the projections of $X_1$ and $X_2$ on the linear spaces spanned by $X_2, \ldots, X_p$ and $X_3, \ldots, X_p$ respectively: 
\begin{align}\label{model: response}
\begin{split}
& X_1=\beta_1+\beta_2X_2+\ldots+\beta_{p} X_p+\xi_1,\\ 
& X_2=\theta_1+\theta_2X_3+\ldots+\theta_{p-1} X_p+\xi_2,
\end{split}
\end{align}
where  $\E\xi_i=0$,  
$\text{Cov}(\xi_i,{X}_{j})=0, \ i=1,2, \ j=3,\ldots,p$ and
$\text{Cov}(\xi_1,X_2)=0.$ We will assume that $\sigma^2_{2.3\ldots p}=\var(\xi_2) \neq 0$ and $\sigma^2_{1.2\ldots p}=\var(\xi_1) \neq 0.$
 From (\ref{model: response}) we have that
$X_1=(\beta_1+\beta_2\theta_1)+(\beta_2\theta_2+\beta_3)X_3+\ldots+(\beta_2\theta_{p-1}+\beta_p) X_p+\beta_2\xi_2+\xi_1$
and $
 \text{Cov}(\xi_1,\xi_2)=0$.

The partial correlation $\rho$ between  $X_1$ and $X_2$ while adjusting for $X_3,\ldots, X_p$  is then
$ \rho=\text{Corr}(\xi_2,\beta_2\xi_2+\xi_1)$ 
 and can be expressed as 
\begin{align}
\rho=\frac{\beta_2}{\sqrt{\beta_2^2+\sigma^2_{1.2\ldots p}/\sigma^2_{2.3\ldots p}}}\label{dependency between partial correlation and regression parameter}. 
 \end{align}
We use representation (\ref{dependency between partial correlation and regression parameter}) in the sequel.

\section{Missing data mechanisms (MDM)}
\label{Missing data mechanisms section}
We consider three models where the data on $X_1$ or $X_1$ and $X_2$ are missing not at random (MNAR).
\begin{center}
\begin{figure}[H]
\begin{center}
\includegraphics{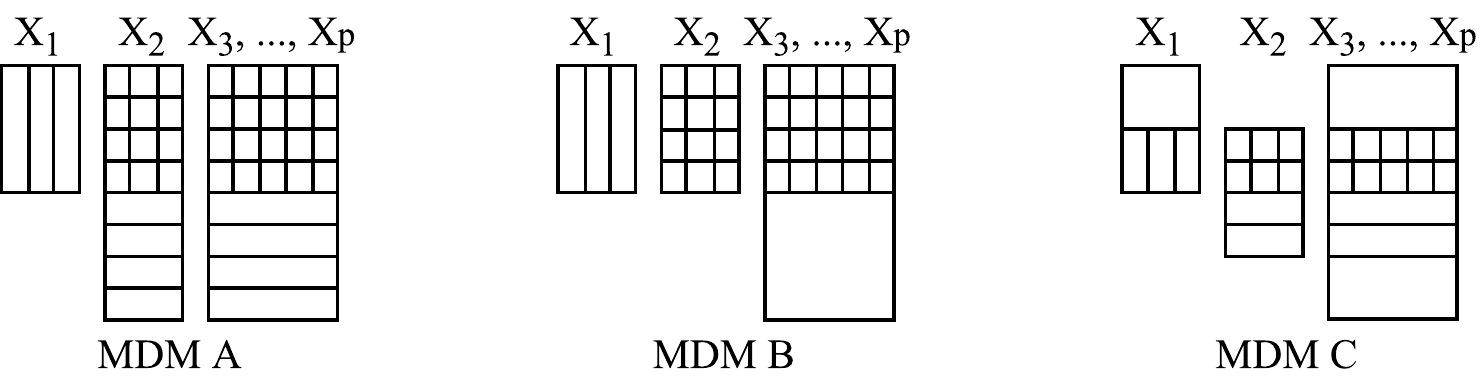}
\end{center}
\caption{Data patterns. Vertical shading represents data used for estimation of $\beta_2$ and $\sigma^2_{1.2\ldots p}$, horizontal shading represents data used for estimation of $\sigma^2_{2.3\ldots p}$. Grid shading corresponds to data used for estimation of  $\beta_2$, $\sigma^2_{1.2\ldots p}$ and $\sigma^2_{2.3\ldots p}.$ Empty boxes represent the observed data that is not used in the estimation procedures.}
\label{Figure:missing mechanisms}
\end{figure}
\end{center}
\noindent \textbf{MDM A.} Let $X_2, X_3, \ldots, X_p$ be fully observed while $X_1$ is observed if $Z=1$ and missing otherwise  (see \autoref{Figure:missing mechanisms}), where 
\begin{equation}\label{model missing mechanism A} 
Z=\mathbbm{1}(\bs{X}_{\m1}\bs{\delta}+\eta_1>0), 
\end{equation}  $\bs{X}_{\m1}=(1, X_2,\ldots, X_p)$, $\bs{\delta}$ is a  $p$ column vector of unknown parameters, $\eta_1 \sim \mathcal{N}(0,1)$ and $\mathbbm{1}$ is the indicator function.
In order to introduce missing not at random data in $X_1$ we follow \cite{Genback2015} by modeling $\xi_1$ in (\ref{model: response}) as 
 $\xi_1=~\gamma\sigma_{1.2\ldots p}\eta_1 + \epsilon,$ where $\E\epsilon=0,$  $\eta_1 \indep  (X_2,\ldots, X_p),$ $\epsilon \indep ( X_2,\ldots, X_p,\eta_1)$ and $\indep$ denotes independence between random variables. Then $\gamma \neq 0$ corresponds to MNAR data, while data are missing at random (MAR) when $\gamma=0.$\\
\noindent \textbf {MDM B.} Let $X_3, \ldots, X_p$ be fully observed while $X_{1}$ and $X_2$ are observed if $Z=1$ and missing otherwise (see  \autoref{Figure:missing mechanisms}), where
\begin{equation}\label{model missing mechanism B} 
Z=\mathbbm{1}(\bs{X}_{\m12}\bs{\delta}+\eta_1>0), 
\end{equation}  
$\bs{X}_{\m12}=(1, X_3,\ldots, X_p),$
$\bs{\delta}$ is  a $(p-1)$  column vector of unknown parameters, $\eta_1 \sim \mathcal{N}(0,1).$ We introduce missing not at random data in $X_1$ by modeling $\xi_1$ in (\ref{model: response}) as 
 $\xi_1=~\gamma\sigma_{1.2\ldots p}\eta_1 + \epsilon,$ where $\E\epsilon=0,$ $\eta_1 \indep (X_2,\ldots, X_p),$ $\epsilon \indep (X_2,\ldots, X_p,\eta_1)$, $\xi_2 \indep (X_3,\ldots, X_p, \eta_1).$  Then $\gamma \neq 0$ corresponds to MNAR data for $X_1$, while data on $X_1$ are MAR when $\gamma=0$. \\
\noindent \textbf{MDM C.}
Let $ X_3, \ldots, X_p$ be fully observed while $X_1$ is observed if $Z_1=1$ and missing otherwise, $X_2$ is observed if $Z_2=1$ and missing otherwise (see  \autoref{Figure:missing mechanisms}), where
  \begin{align}\label{model dropout: separate, model dropout1}
&Z_1=\mathbbm{1}(\bs{X}_{\m12}\bs{\delta}_1+\eta_1>0),\\
\label{model dropout: separate, model dropout2}
&Z_2=\mathbbm{1}(\bs{X}_{\m12}\bs{\delta}_2+\eta_2>0), 
\end{align}
$\bs{X}_{\m12}=(1, X_3,\ldots, X_p),$  $\bs{\delta}_1$  and $\bs{\delta}_2$ are $(p-1)$ column vectors of unknown parameters and $(\eta_1,\eta_2) \sim \mathcal{N}(\bs{0}, \bs{I}_2)$, $\bs{I}_2$ is an identity matrix of size 2. As above we introduce missing not at random data by modeling $\xi_1$ and $\xi_2$ in (\ref{model: response}) as $\xi_1=\gamma_{1}\sigma_{1.2\ldots p}\eta_1+\epsilon_1$, $\xi_2=\gamma_{2}\sigma_{2.3\ldots p}\eta_2+\epsilon_2$, where $\E\epsilon_1=0$, $\E\epsilon_2=0,$ $\eta_1\indep (X_2,\ldots, X_p, \eta_2),$ $\epsilon_1 \indep (X_2,\ldots, X_p,\eta_1,\eta_2),$ $\eta_2 \indep (X_3,\ldots, X_p)$ , $\epsilon_2 \indep (X_3,\ldots, X_p,\eta_2).$ $\gamma_1 \neq 0$ corresponds to MNAR data for $X_1$ and $\gamma_2 \neq 0$ corresponds to MNAR data for $X_2.$

\section{Inference}
\label{Inference section}
\subsection{Inference under MDM A}
Let $\{X_{1i}, X_{2i}, \ldots, X_{pi},Z_i\}_{i=1}^{N}$  be a random sample from $(X_{1}, X_{2}, \ldots, X_{p},Z)$ for which MDM A holds. For a given $\gamma,$ we  propose an estimator $\widehat{\rho}_{\gamma}$ for $\rho$ based on bias correction of complete cases ordinary least squares (OLS) estimators of quantities in (\ref{dependency between partial correlation and regression parameter}) (see \autoref{Figure:missing mechanisms}):
\begin{equation}\label{part cor estimator}
\widehat{\rho}_{\gamma}=\frac{\widehat{\beta}_2}{\sqrt{\widehat{\beta}_2^2+\widehat{\sigma}_{1.2 \ldots p}^2/ \widehat{\sigma}^2_{2.3\ldots p}}},
\end{equation}
where  
\begin{align}
&\widehat{\beta}_2=\widehat{\beta}_{2,ols}-\gamma\widehat{\sigma}_{1.2 \ldots p}\left[(\bs{X}_{\m1s}^T\bs{X}_{\m1s})^{\m1}\bs{X}_{\m1s}^T\bs{\lambda}_{\widehat{\bs{u}}}\right]_2,\nonumber \\
&\widehat{\sigma}_{1.2 \ldots p}^2=\frac{\widehat{\sigma}_{1.2 \ldots p, ols}^2}{1+\gamma^2(\bs{\widehat{u}}^{T}\bs{\lambda}_{\widehat{\bs{u}}}-\bs{\lambda}_{\widehat{\bs{u}}}^{T}\bs{X}_{\m1s} (\bs{X}_{\m1s}^T\bs{X}_{\m1s})^{\m1} \bs{X}_{\m1s}^T \bs{\lambda}_{\widehat{\bs{u}}})/(n-p) } \label{eq:corrected var},
\end{align}
and $\widehat{\sigma}^2_{2.3\ldots p}$ is an OLS estimator of $\sigma^2_{2.3\ldots p}$ based on $\{ X_{2i}, \ldots, X_{pi}\}_{i=1}^{N}.$  \\ Here $\widehat{\beta}_{2,ols}=\left[(\bs{X}_{\m1s}^T\bs{X}_{\m1s})^{\m1}\bs{X}_{\m1s}^T\bs{X}_{1s}\right]_2$ and $\widehat{\sigma}_{1.2 \ldots p, ols}^2=\bs{X}_{1s}^T(\bs{I}_n-\bs{X}_{\m1s}(\bs{X}_{\m1s}^T\bs{X}_{\m1s})^{\m1}\bs{X}_{\m1s}^T)\bs{X}_{1s}/(n-p)$ are OLS estimators of $\beta_2$ and $\sigma_{1.2\ldots p}^{2}$ based on $n$ complete cases; $\bs{X}_{1s}$ denotes an $n<N$ vector of observed $X_{1}$ for complete cases; $\bs{X}_{\m1s}$ represents an $n \times p$ matrix of observed covariates $(1,X_2,\ldots,X_p)$ for complete cases.  $\bs{\widehat{u}}=(\widehat{u}_1, \ldots,\widehat{u}_n)^{T} =-\bs{X}_{\m1s}\widehat{\bs{\delta}}$, $\widehat{\bs{\delta}}$ is the maximum likelihood estimator of $\bs{\delta}$ in probit model (\ref{model missing mechanism A}) based on full data $\{X_{2i}, \ldots, X_{pi},Z_i\}_{i=1}^{N}$; $\bs{\lambda_{\widehat{u}}}=(\lambda(\widehat{u}_1), \ldots, \lambda(\widehat{u}_n))^{T}$, $\lambda(\widehat{u}_i)=\frac{\phi(\widehat{u}_i)}{\Phi(-\widehat{u}_i)}, \  i=1,\ldots,n$ denotes the inverse Mills ratio, $\phi$ and $\Phi$ are respectively the standard normal density and cumulative distribution functions. Also, $\bs{I}_n$ is an $n \times n$ identity matrix and $[v]_2$ denotes the second element of a vector $v.$ 
\begin{theorem}\label{theorem CAN}
Under MDM A and regularity assumptions (see \autoref{app:A}) $\widehat{\rho}_{\gamma}$ is a consistent estimator of $\rho$ and
$$\sqrt{n}(\widehat{\rho}_{\gamma}-\rho)/\widehat{\text{se}}_{\widehat{\rho}_{\gamma}} \rightarrow_d \mathcal{N}(0, 1), $$
where 
$
\widehat{\text{se}}_{\widehat{\rho}_{\gamma}}=\sqrt{\frac{\widehat{\sigma}_{1.2 \ldots p}^{2}(1+\gamma^{2}\bs{\widehat{u}}^{T}\bs{\lambda}_{\bs{\widehat{u}}}/n-\gamma^{2}\bs{\lambda}_{\bs{\widehat{u}}}^{T}\bs{\lambda}_{\bs{\widehat{u}}}/n) (\bs{X}_{\m1s}^T\bs{X}_{\m1s})_{22}^{\m1}}{\widehat{\beta}^2_2+\widehat{\sigma}^2_{1.2 \ldots p}/\widehat{\sigma}^2_{2.3\ldots p}}}.$
\end{theorem}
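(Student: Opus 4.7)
The plan is to write $\widehat{\rho}_\gamma=g(\widehat{\beta}_2,\widehat{\sigma}^2_{1.2\ldots p},\widehat{\sigma}^2_{2.3\ldots p})$ for the smooth map $g(a,b,c)=a/\sqrt{a^2+b/c}$ suggested by representation (\ref{dependency between partial correlation and regression parameter}), to establish a joint central limit theorem for the three plug-in estimators, and then to apply the delta method. Consistency of $\widehat{\rho}_\gamma$ will fall out of continuity of $g$ together with consistency of its three arguments.

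The starting observation is that under MDM A the representation $\xi_1=\gamma\sigma_{1.2\ldots p}\eta_1+\epsilon$ combined with the selection rule $Z=\mathbbm{1}(\bs{X}_{\m1}\bs{\delta}+\eta_1>0)$ yields
\begin{equation*}
E[\xi_1\mid \bs{X}_{\m1},Z=1]=\gamma\sigma_{1.2\ldots p}\lambda(-\bs{X}_{\m1}\bs{\delta}),
\end{equation*}
so that $\widehat{\beta}_2$ and $\widehat{\sigma}^2_{1.2\ldots p}$ are precisely the Heckman two-step estimators with first-stage probit estimating $\bs{\delta}$. Their joint consistency and asymptotic normality, with the extra adjustment to the asymptotic variance contributed by the first-stage probit, I would invoke from the results developed in \citet{Genback2015}. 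The third component $\widehat{\sigma}^2_{2.3\ldots p}$ is a full-data OLS residual variance, so standard OLS asymptotics apply; under the regularity assumption that $n/N\to\pi\in(0,1]$, all three components are $\sqrt{n}$-asymptotically normal. Stacking them produces a joint multivariate normal limit for $\sqrt{n}$ times the deviations from the truth.

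Because $\sigma^2_{1.2\ldots p}>0$, the map $g$ is continuously differentiable at the true value. The delta method then gives asymptotic normality of $\sqrt{n}(\widehat{\rho}_\gamma-\rho)$. The partials read
\begin{equation*}
\tfrac{\partial g}{\partial a}=\frac{b/c}{(a^2+b/c)^{3/2}},\qquad \tfrac{\partial g}{\partial b}=-\frac{a}{2c(a^2+b/c)^{3/2}},\qquad \tfrac{\partial g}{\partial c}=\frac{ab}{2c^2(a^2+b/c)^{3/2}},
\end{equation*}
which, combined with the Heckman-corrected variance of $\widehat{\beta}_2$ and with the variances and covariances of the two residual-variance estimators, should reduce to the claimed $\widehat{\text{se}}_{\widehat{\rho}_\gamma}$. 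A final appeal to Slutsky using consistency of all plug-in quantities inside $\widehat{\text{se}}_{\widehat{\rho}_\gamma}$ closes the argument.

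The main obstacle, in my view, is the algebraic simplification in this last step: the displayed standard error involves only the Heckman variance of $\widehat{\beta}_2$ divided by $\beta_2^2+\sigma^2_{1.2\ldots p}/\sigma^2_{2.3\ldots p}$, with no explicit contribution from $\widehat{\sigma}^2_{1.2\ldots p}$ or $\widehat{\sigma}^2_{2.3\ldots p}$. Verifying that such cancellations really occur, or justifying that the other two contributions are asymptotically absorbed into the leading $(\partial g/\partial a)^2$ term, is the piece requiring the most care. A subsidiary technical point is the bookkeeping for the extra variance contribution from the first-stage probit estimator $\widehat{\bs{\delta}}$, which is the source of the inverse-Mills-ratio quadratic forms $\bs{\widehat{u}}^T\bs{\lambda}_{\bs{\widehat{u}}}$ and $\bs{\lambda}_{\bs{\widehat{u}}}^T\bs{\lambda}_{\bs{\widehat{u}}}$ appearing in the stated standard error.
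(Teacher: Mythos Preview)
Your route differs from the paper's. The paper does \emph{not} build a joint CLT for $(\widehat{\beta}_2,\widehat{\sigma}^2_{1.2\ldots p},\widehat{\sigma}^2_{2.3\ldots p})$ and then apply the delta method. Instead it (i) proves consistency of $\widehat{\sigma}^2_{1.2\ldots p}$ and $\widehat{\sigma}^2_{2.3\ldots p}$ directly from truncated-normal moment identities, (ii) proves a CLT for $\widehat{\beta}_2$ alone---replacing both $\widehat{\sigma}_{1.2\ldots p}$ and $\bs{\lambda}_{\widehat{\bs{u}}}$ by their probability limits via Slutsky, so the first-stage probit contributes nothing to the asymptotic variance---and (iii) passes to $\widehat{\rho}_\gamma$ by one more Slutsky step, treating the whole denominator $\sqrt{\widehat{\beta}_2^2+\widehat{\sigma}^2_{1.2\ldots p}/\widehat{\sigma}^2_{2.3\ldots p}}$ as a consistent scalar. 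This yields the displayed $\widehat{\text{se}}_{\widehat{\rho}_\gamma}$ immediately, with no cancellation to verify: the contributions from the two variance estimators and from $\widehat{\bs{\delta}}$ are simply discarded at the consistency stage rather than shown to vanish.

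Your suspicion about the ``algebraic simplification'' is therefore well placed, and your route would not reproduce the stated standard error. A genuine delta method on $g(a,b,c)=a/\sqrt{a^2+b/c}$ gives leading coefficient $(\partial g/\partial a)^2=(b/c)^2/(a^2+b/c)^3$, not $1/(a^2+b/c)$, together with nonvanishing terms in $\partial g/\partial b$ and $\partial g/\partial c$; these do not cancel in general. The paper's shortcut in step~(iii) lets $\widehat{\beta}_2$ fluctuate in the numerator while freezing it (and the variance estimators) at their limits in the denominator, which is exactly what produces the simpler form $\nu_{\beta_2}/(\beta_2^2+\sigma^2_{1.2\ldots p}/\sigma^2_{2.3\ldots p})$. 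So the discrepancy you anticipated is real: the paper arrives at its $\widehat{\text{se}}_{\widehat{\rho}_\gamma}$ by omitting precisely the terms whose absence you found puzzling, not by showing they cancel.
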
 

\noindent A proof is provided in \autoref{app:A}. A $(1-\alpha)100\%$ confidence interval  for $\rho$ is thus:
\begin{equation} \label{CI}
\text{CI}(\rho, \gamma,\alpha)=[\widehat{\rho}_{\gamma}-c_{\frac{\alpha}{2}}\widehat{\text{se}}_{\widehat{\rho}_{\gamma}};\widehat{\rho}_{\gamma}+c_{\frac{\alpha}{2}}\widehat{\text{se}}_{\widehat{\rho}_{\gamma}}].
\end{equation}
Here $c_{\frac{\alpha}{2}}$ is the  $(1-\alpha)100\text{th}$  percentile of the standard normal distribution.
However, the true value of $\gamma,$ $\gamma_0,$ is typically unknown in applications. Setting it to one certain value in analysis, for example 0, is a strong assumption which is typically difficult to check empirically. Instead we propose to assume that $\gamma_0$ belongs to an interval $[\gamma_{min}, \gamma_{max}]$ and provide inference under this weaker assumption.  For example, since $\gamma$  is the correlation between $\xi_1$ and $\eta_1,$ $\gamma_0 \in [-1, 1].$  Then, intervals (\ref{CI}) can be constructed for each $\gamma \in [\gamma_{min}, \gamma_{max}]$. Although each specific confidence interval (\ref{CI}) may fail to cover the true $\rho_0$ corresponding to $\gamma_0$ with probability $(1-\alpha)100\%,$ their union
\begin{equation*}
\text{UR}(\rho,[\gamma_{min}, \gamma_{max}], \alpha )=\bigcup\limits_{ \gamma \in [\gamma_{min}, \gamma_{max}]} \text{CI}(\rho, \gamma,\alpha),
\end{equation*} 
which we call the  uncertainty region for $\rho$, covers  $\rho_0$  with  at least $(1-\alpha)100\%$ probability as can be seen below.
\begin{corollary}
\label{corollary: coverage}
Under the assumptions of \autoref{theorem CAN}, if the true $\gamma_0 \in [\gamma_{min}, \gamma_{max}]$, the uncertainty region $\text{UR }(\rho,[\gamma_{min}, \gamma_{max}], \alpha)$  has asymptotic coverage for $\rho_0$  of at least $(1-\alpha)100\%.$
\end{corollary}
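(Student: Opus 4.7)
The proof is a short containment argument resting almost entirely on \autoref{theorem CAN}. The key observation is that, although $\gamma_0$ is unknown to the analyst, the assumption $\gamma_0 \in [\gamma_{min}, \gamma_{max}]$ places it inside the index set of the union, and this alone is enough to control the coverage.

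First, I would record the trivial set inclusion
\[
\text{CI}(\rho, \gamma_0, \alpha) \subseteq \bigcup_{\gamma \in [\gamma_{min}, \gamma_{max}]} \text{CI}(\rho, \gamma, \alpha) = \text{UR}(\rho, [\gamma_{min}, \gamma_{max}], \alpha),
\]
which is immediate from $\gamma_0 \in [\gamma_{min}, \gamma_{max}]$. Treating both sides as (random) sets, this gives the event inclusion $\{\rho_0 \in \text{CI}(\rho, \gamma_0, \alpha)\} \subseteq \{\rho_0 \in \text{UR}(\rho, [\gamma_{min}, \gamma_{max}], \alpha)\}$, and monotonicity of probability yields
\[
P(\rho_0 \in \text{UR}(\rho, [\gamma_{min}, \gamma_{max}], \alpha)) \geq P(\rho_0 \in \text{CI}(\rho, \gamma_0, \alpha)).
\]

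Next I would apply \autoref{theorem CAN} at the true dependency parameter, that is with $\gamma$ chosen to equal $\gamma_0$. Because the data are in fact generated under MDM A with this $\gamma_0$, the theorem delivers
\[
\sqrt{n}(\widehat{\rho}_{\gamma_0} - \rho_0)/\widehat{\text{se}}_{\widehat{\rho}_{\gamma_0}} \to_d \mathcal{N}(0,1),
\]
from which the standard pivot argument for a symmetric normal interval shows that $\text{CI}(\rho, \gamma_0, \alpha)$ as defined in (\ref{CI}) has asymptotic coverage exactly $(1-\alpha)100\%$ for $\rho_0$. Combining this with the set-inclusion bound above gives
\[
\liminf_{N \to \infty} P(\rho_0 \in \text{UR}(\rho, [\gamma_{min}, \gamma_{max}], \alpha)) \geq 1 - \alpha,
\]
which is the conclusion of the corollary.

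There is essentially no technical obstacle here: the entire proof is a one-line containment wrapped around \autoref{theorem CAN}. The conceptual content is simply that taking the union over $\gamma$ lets one attain valid (though possibly conservative) coverage without having to specify or estimate $\gamma_0$; the price is that the lower bound $1-\alpha$ need not be tight unless the interval map $\gamma \mapsto \text{CI}(\rho, \gamma, \alpha)$ is degenerate near $\gamma_0$. The same plan will transfer verbatim to MDM B and MDM C once the analogous consistency and asymptotic-normality results for $\widehat{\rho}_{\gamma}$ (and for $\widehat{\rho}_{\gamma_1, \gamma_2}$) under those mechanisms are established.
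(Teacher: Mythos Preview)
Your proof is correct and is essentially the same argument as the paper's: both rest on the observation that $\gamma_0\in[\gamma_{min},\gamma_{max}]$ implies $\text{CI}(\rho,\gamma_0,\alpha)\subseteq \text{UR}(\rho,[\gamma_{min},\gamma_{max}],\alpha)$, after which \autoref{theorem CAN} applied at $\gamma=\gamma_0$ gives the asymptotic $(1-\alpha)$ bound. The paper phrases the containment through complements and intersections rather than your direct set inclusion, but the logic and level of detail are identical.
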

\noindent A proof is provided in \autoref{app:A}. 
\subsection{Inference under MDM B}
Results under missing mechanisms B follow the same structure as for mechanism A. A consistent and asymptotically normal estimator for partial correlation (see \autoref{app:B}, \autoref{theorem: CAN for rho mechanism B})  is defined by (\ref{part cor estimator}), with $\bs{\widehat{u}}=(\widehat{u}_1, \ldots,\widehat{u}_n)^{T} =-\bs{X}_{\m12s}\widehat{\bs{\delta}}$, where $\bs{X}_{\m12s}$ represents an $n \times (p-1)$ matrix of observed covariates $(1, X_3, \ldots, X_p)$ for complete cases; $\widehat{\bs{\delta}}$ a maximum likelihood estimator of $\bs{\delta}$ in probit model (\ref{model missing mechanism B}) based on full data $\{X_{3i}, \ldots, X_{pi},Z_i\}_{i=~1}^{N},$ and $\widehat{\sigma}^2_{2.3\ldots p}$ is an ordinary OLS estimator of $\sigma^2_{2.3\ldots p}$ based on complete cases $\{X_{2i}, \ldots, X_{pi}\}_{i=1}^{n}$. Uncertainty regions can then be constructed as above for MDM A (see \autoref{app:B}, \autoref{corollary: coverage B}).
\subsection{Inference under MDM C}
Let $\{X_{1i}, X_{2i}, \ldots, X_{pi},Z_{1i},Z_{2i}\}_{i=1}^{N}$ be a random sample from $(X_{1}, X_{2}, \ldots, X_{p},Z_1,Z_2)$ for which MDM C holds.  
Results under MDM C follow the same structure as for mechanism A. A consistent and asymptotically normal estimator for $\rho$ (see \autoref{app:B}, \autoref{theorem: CAN for rho mechanism C} for proofs) is defined as
\begin{equation*}
\widehat{\rho}_{\gamma_1, \gamma_2}=\frac{\widehat{\beta}_2}{\sqrt{\widehat{\beta}_2^2+\widehat{\sigma}_{1.2 \ldots p}^2/\widehat{\sigma}_{2.3 \ldots p}^{2}}},
\end{equation*}
where
\begin{align*}
&\widehat{\beta}_2=\widehat{\beta}_{2,ols}-\gamma\widehat{\sigma}_{1.2 \ldots p}\left[(\bs{X}_{\m1s}^T\bs{X}_{\m1s})^{\m1}\bs{X}_{\m1s}^T\bs{\lambda}_{\widehat{\bs{u}}}\right]_2,\\
&\widehat{\sigma}_{1.2 \ldots p}^2=\frac{\widehat{\sigma}_{1.2 \ldots p, ols}^2}{1+\gamma_1^2(\bs{\widehat{u}}^{T}\bs{\lambda}_{\widehat{\bs{u}}}-\bs{\lambda}_{\widehat{\bs{u}}}^{T}\bs{X}_{\m1s} (\bs{X}_{\m1s}^T\bs{X}_{\m1s})^{\m1} \bs{X}_{\m1s}^T \bs{\lambda}_{\widehat{\bs{u}}})/(n-p) },\\
&\widehat{\sigma}_{2.3\ldots p}^2=\frac{\widehat{\sigma}_{2.3\ldots p, ols}^2}{1+\gamma_2^2 (\bs{\widehat{w}}^{T}\bs{\lambda}_{\widehat{\bs{w}}}-\bs{\lambda}_{\widehat{\bs{w}}}^{T}\bs{X}_{\m12s_2} (\bs{X}_{\m12s_2}^T\bs{X}_{\m12s_2})^{\m1} \bs{X}_{\m12s_2}^T \bs{\lambda}_{\widehat{\bs{w}}})/(n_2-p)},
\end{align*}
$\widehat{\beta}_{2,ols}=\left[(\bs{X}_{\m1s}^T\bs{X}_{\m1s})^{\m1}\bs{X}_{\m1s}^T\bs{X}_{1s}\right]_2$ and $\widehat{\sigma}_{1.2 \ldots p, ols}^2=\bs{X}_{1s}^T(\bs{I}_n-\bs{X}_{\m1s}(\bs{X}_{\m1s}^T\bs{X}_{\m1s})^{\m1}\bs{X}_{\m1s}^T)\bs{X}_{1s}/(n-p)$ are OLS estimators of $\beta_2$ and $\sigma_{1.2\ldots p}^{2}$ based on $n$ complete cases, and \\
$\widehat{\sigma}_{2.3\ldots p, ols}^2=\bs{X}_{2s_2}^T(\bs{I}_{n_2}-\bs{X}_{\m12s_2}(\bs{X}_{\m12s}^T\bs{X}_{\m12s_2})^{\m1}\bs{X}_{\m12s_2}^T)\bs{X}_{2s_2}/(n_2-p)$  is an OLS estimator of $\sigma_{2.3\ldots p}^{2}$ based on $n_2$ cases with observed $X_2$. $\bs{X}_{\m1s}$ and $\bs{X}_{\m12s}$ represent respectively an $n \times p$ and an $n \times (p-1)$ matrices of observed covariates $(1, X_2, \ldots, X_p)$ and $(1, X_3, \ldots, X_p)$ for complete cases; $\bs{X}_{\m12s_2}$ is an $n_2 \times (p-1)$ matrix of observed covariates $(1,X_3,\ldots, X_p)$ for cases with observed $X_2$. $\bs{X}_{1s}$ denotes an $n$ vector of observed $X_{1}$ for complete cases, $\bs{X}_{2s_2}$ is an $n_2$ vector of observed $X_{2}$. $\bs{\lambda_{\widehat{u}}}=(\lambda(\widehat{u}_1), \ldots, \lambda(\widehat{u}_n))^{T},$ where $\lambda$ denotes inverse Mills ratio,
$\bs{\widehat{u}}=(\widehat{u}_1, \ldots,\widehat{u}_n)^{T} =-\bs{X}_{\m12s}\widehat{\bs{\delta}}_1$, $\widehat{\bs{\delta}}_1$ is the maximum likelihood estimator of $\bs{\delta}_1$ under  model (\ref{model dropout: separate, model dropout1}) based on full data $\{X_{3i}, \ldots, X_{pi},Z_{1i}\}_{i=~1}^{N}.$  
    $\bs{\lambda_{\widehat{w}}}=(\lambda(\widehat{w}_1), \ldots, \lambda(\widehat{w}_{n_2}))^{T}$, where $\lambda$ denotes inverse Mills ratio, $\bs{\widehat{w}}=(\widehat{w}_1, \ldots,\widehat{w}_{n_2})^{T} =-\bs{X}_{\m12s_2}\widehat{\bs{\delta}}_2$, $\widehat{\bs{\delta}}_2$  is the maximum likelihood estimator of $\bs{\delta}_2$ under probit model (\ref{model dropout: separate, model dropout2}) based on full data $\{X_{3i}, \ldots, X_{pi},Z_{2i}\}_{i=~1}^{N}.$
A $(1-\alpha)100\%$ confidence interval is
\begin{equation*} 
\text{CI}(\rho, \gamma_1,\gamma_2, \alpha)=[\widehat{\rho}_{\gamma_1, \gamma_2}-c_{\frac{\alpha}{2}}\widehat{\text{se}}_{\widehat{\rho}_{\gamma_1, \gamma_2}};\widehat{\rho}_{\gamma_1, \gamma_2}+c_{\frac{\alpha}{2}}\widehat{\text{se}}_{\widehat{\rho}_{\gamma_1, \gamma_2}}].
\end{equation*}
Here $c_{\frac{\alpha}{2}}$ is the  $(1-\alpha)100\%$  percentile of the standard normal distribution and 
\begin{align*}
\widehat{\text{se}}_{\widehat{\rho}_{\gamma_1, \gamma_2}}=\sqrt{\frac{\widehat{\sigma}_{1.2 \ldots p}^{2}(1+\gamma_1^{2}\bs{\widehat{u}}^{T}\bs{\lambda}_{\bs{\widehat{u}}}/n-\gamma_1^{2}\bs{\lambda}_{\bs{\widehat{u}}}^{T}\bs{\lambda}_{\bs{\widehat{u}}}/n) (\bs{X}_{\m1s}^T\bs{X}_{\m1s})_{22}^{\m1}}{\widehat{\beta}_2^2+\widehat{\sigma}_{1.2 \ldots p}^2/\widehat{\sigma}_{2.3 \ldots p}^{2}}}.
\end{align*} 
An uncertainty region is, accordingly, 
\begin{equation*}
\text{UR}(\rho,[\gamma_{1min}, \gamma_{1max}],[\gamma_{2min}, \gamma_{2max}], \alpha )=\bigcup\limits_{ \gamma_1 \in [\gamma_{1min}, \gamma_{1max}],\gamma_2 \in [\gamma_{2min}, \gamma_{2max}]} \text{CI}(\rho,\gamma_1, \gamma_2, \alpha),
\end{equation*}   
and asymptotically covers the true $\rho=\rho_{\gamma_{10}, \gamma_{20}}$, where  $(\gamma_{10}, \gamma_{20})$ is the true value of $(\gamma_{1}, \gamma_{2})$,  with  at least $(1-\alpha)100\%$  probability (see \autoref{app:B}, \autoref{corollary: coverage C}).

\section{Application}
\label{Application section}
We use the theoretical results  developed in this paper to infer on partial correlation between longitudinal changes in gray matter volume of hippocampus and episodic memory decline, when adjusting for the effect of age and hypertension \citep{Gorbach2017167}, with  the data from the Betula study \citep{Nilsson1997theBetula}. Briefly, the sample consists of 264 older adults that underwent magnetic resonance imaging (MRI) at one of the Betula waves, had up to 25 years history of cognitive assessment and were scheduled for a MRI follow-up. Of the 264 initially scanned
participants, 155 underwent a follow-up MRI
examination; see  \citet{Gorbach2017167} for more detailed description of the sample and measures used. Since information on cognition changes could be obtained for all individuals while changes in gray matter volume of hippocampus are missing for approximately 41\% of individuals in the sample, we consider missing data mechanism A.  
Point estimates $\widehat{\rho}_{\gamma}$ can be constructed under assumptions of the true $\gamma_0=\gamma$,  where $\gamma \in [0;1]$. $\gamma$ is constrained to be nonnegative, since given age, hypertension and cognition change, an individual with smaller value of gray matter change, that is fraction of gray matter volume at second and first measurements,   may be expected to have poorer health and thus more likely to drop out, which corresponds to $\gamma\geq0$.  
As can be seen from \autoref{Figure: application},  an uncertainty region for the partial correlation is $UR(\rho, [0;1], 0.05)=[0.153; 0.511].$ The interval does not contain 0 which is in line with the analysis in \citet{Gorbach2017167} where an uncertainty region for $\rho$ could not be produced.

\begin{figure}[h]
\begin{center}
\includegraphics[]{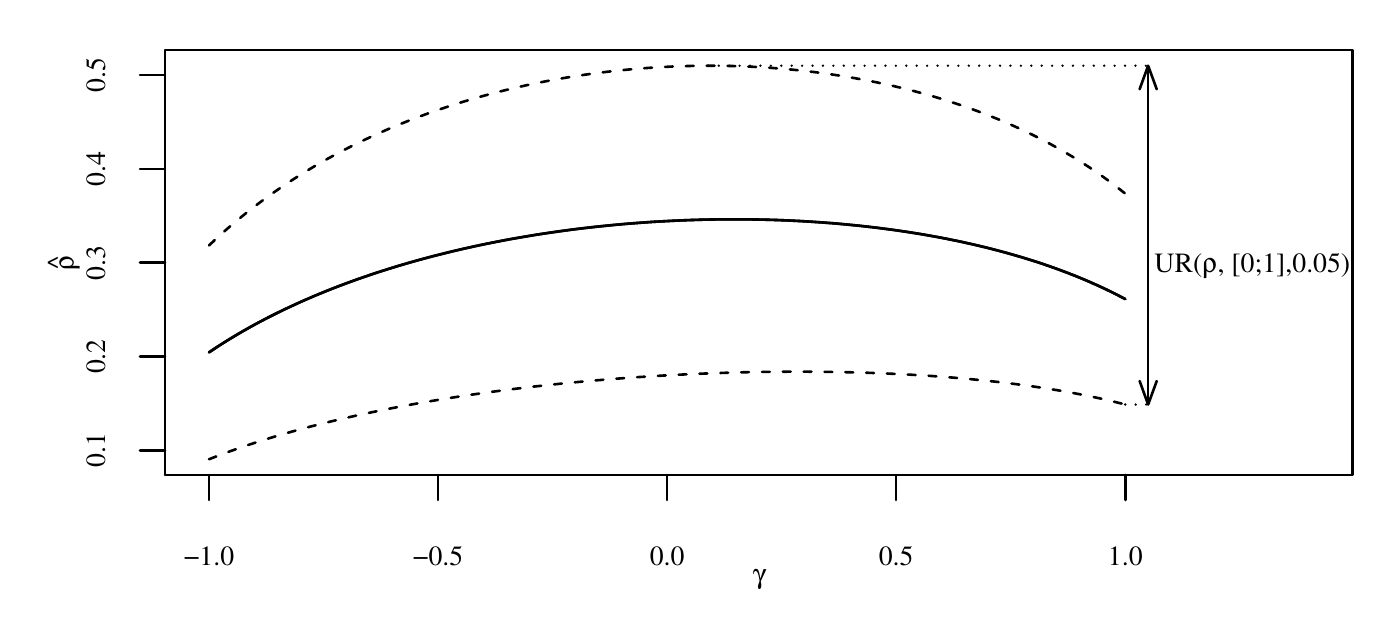}
\end{center}
\caption{Estimated partial correlation between changes of gray matter volume of hippocampus and episodic memory changes controlled for age and hypertension (solid line) for different values of parameter $\gamma$ with corresponding 95\% CI's (bounds of CI's are represented as dashed curves) in presence of missing not at random data.}
\label{Figure: application}
\end{figure}

\section{Simulation study}
\label{Simulation study section}
This simulation study uses a design inspired by the above application.
Observations for age ($X_3$) and hypertension ($X_4$) are simulated from the empirical distribution of the data. Since in the study episodic memory change ($X_2$) was available for the full sample, while hippocampus gray matter change ($X_1$) was partially observed, we simulate data under missing mechanism A as follows:
\begin{align*}
X_{2i}&=2.313-0.042X_{3i}-0.216X_{4i}+\xi_{2i},\\
X_{1i}&=1.092+0.01X_{2i}-0.002X_{3i}-0.006X_{4i}+0.028\gamma_0 \eta_{1i}+\epsilon_i,\\
Z_i &=\mathbbm{1}(2.708+0.548X_{2i}-0.036X_{3i}-0.042X_{4i}+\eta_{1i}>0),\\
(\xi_{2i},\epsilon_i,  \eta_{1i})& \sim N_3\left(\bs{0},\text{diag} (1.16,0.028^2(1-\gamma^2), 1)\right), i=1,\ldots,N,
\end{align*}
where regression parameters for simulation of $X_{2i}$ and $X_{1i}$ are the corresponding OLS estimates from complete cases linear regressions fit obtained from the data. $Z_i$ is simulated from the probit regression fit to the data where we have changed the parameter for $X_{2i}$ from 0.048 to 0.548 to increase the difference between complete cases and full data distributions.  
 The partial correlation between simulated hippocampus change and episodic memory decline is $\rho=0.359.$
Data are generated for $\gamma_0=0.1, 0.5, 0.8$ and for sample sizes $N=100$ and $N=250$. Around 50\% of data are missing for each generated sample.  The width and empirical coverage of 95\%  confidence intervals based on complete cases (CC CI), confidence intervals constructed under the true data law $\gamma=\gamma_0$ (oracle CI) and uncertainty regions $UR(\rho, [0, 0.5], 0.05)$ are computed for 1000 replicates.  
\begin{figure}[H]
\begin{center}
\includegraphics{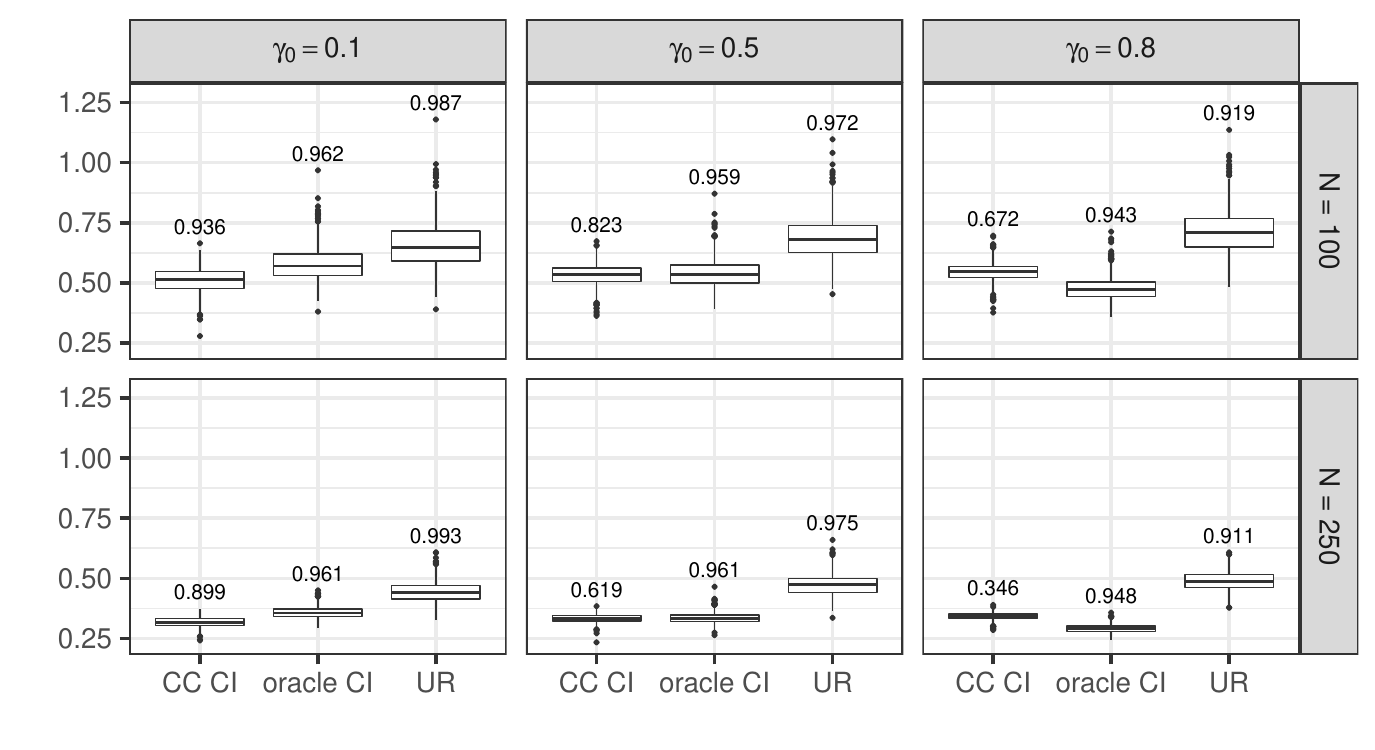}
\caption{Boxplots of widths of the intervals for 95\% nominal coverage for complete cases confidence intervals (CC CI), CI  under the true data law ($\gamma=\gamma_0$, oracle CI) and URs assuming that $\gamma \in [0,0.5]$  for 1000 simulations of data using $\gamma_0=0.1, 0.5, 0.8$ and for sample sizes $N=100$ and $N=250$.  Empirical coverage is labeled above each boxplot.}
\label{Figure:simulation results}
\end{center}
\end{figure}

As \autoref{Figure:simulation results} shows, the empirical coverage of complete cases confidence intervals decreases with increasing value of $\gamma$ and/or increasing sample size.  The empirical coverage of oracle confidence intervals based on true data law is around 95\% as expected. The empirical coverage of constructed uncertainty regions, in turn, is as expected above 95\% when  $\gamma_0$  used for data generation lies within the assumed in estimation interval $[0, 0.5]$ (for the cases $\gamma_0=0.1$ and $\gamma_0=0.5$). Noteworthy, even for the data generated under $\gamma_0=0.8$, uncertainty regions constructed under the assumption of $\gamma_0 \in [0, 0.5]$ have much higher empirical coverage than complete cases confidence intervals.

\section{Discussion}
\label{Discussion section}
The uncertainty regions proposed here are an alternative to establishing possible identifiability of $\gamma$ and the partial correlation in the considered semiparametric missing mechanism models.  Known methods for estimation in this constext are Heckman two-step type approaches \citep{heckman1979sample,Vella1998Estimating}, which rely heavily on the nonlinearity of the inverse Mills ratio. However, as the inverse Mills ratio is linear for a wide range of its arguments, identifiability and thus point estimation is not possible in practice \citep{Puhani2000Heckman}.

 Uncertainty regions were studied in wider generality in \citet{vansteelandt2006ignorance}, who proposed to construct an uncertainty region for an unidentified parameter by adding confidence limits to estimated bounds of an ignorance region, which is a range of parameter values that correspond to different full data distributions compatible with the observed data law. To do so, \citet{vansteelandt2006ignorance}  relies on the assumption of lower and upper bounds of the ignorance region being independent of the observed data law (Assumption 2, p. 960). In our approach, by using instead a union of confidence intervals to define an uncertainty region, one avoids the aforementioned assumption, which is seldom fulfilled.
  
  \citet{Genback2015} uses bounds for the variance of the residuals in a linear regression to deduce uncertainty regions for regression parameters when data is missing not at random. The bias corrected estimators of such residual variance introduced in this paper (e.g. (\ref{eq:corrected var})) can be used to provide narrower uncertainty regions than those proposed in \citet{Genback2015}.
  

Finally, note that the results developed in the paper also hold when missing data occur in $X_3, \ldots, X_p$ if the latter is missing at random. 


\section*{Acknowledgements}
The authors would like to thank Minna Genb\"ack  and Angel Angelov for their valuable comments. This work was supported by Swedish Research Council (grant number 340-2012-5931 to Xavier de Luna). 
\begin{appendices}
\section{}
\label{app:A}
\noindent \textbf{Regularity assumptions for \autoref{theorem CAN}.}
\begin{enumerate}
 \item \label{ThA:1}$EX_j^{2}<~\infty, \  j=1,\ldots,p;$ $\sigma^2_{2.3\ldots p}\neq 0$, $\sigma^2_{1.2\ldots p}\neq 0;$
 \item \label{ThA:2}$ \det \bs{X}_{\m1s}^T\bs{X}_{\m1s}\neq0$, $ \det \E(\bs{X}_{\m1}^{T} \bs{X}_{\m1}|Z=1) \neq0;$ 
 \item \label{ThA:3}$\det\bs{X}_{\m12s}^T\bs{X}_{\m12s}\neq0, \  \det \E(\bs{X}_{\m12}^{T} \bs{X}_{\m12}) \neq0,$ where $\bs{X}_{\m12s}=(\bs{1}, \bs{X}_3, \ldots, \bs{X}_p)$ - $N \times (p-1)$ matrix of observed covariates and $\bs{X}_{\m12}=(1, X_3, \ldots, X_p)$ is a vector of random variables;
 \item \label{ThA:4} $1+\frac{\gamma^2}{n-p}(\bs{\widehat{u}}^{T}\bs{\lambda}_{\widehat{\bs{u}}}-\bs{\lambda}_{\widehat{\bs{u}}}^{T}\bs{X}_{\m1s} (\bs{X}_{\m1s}^T\bs{X}_{\m1s})^{\m1} \bs{X}_{\m1s}^T \bs{\lambda}_{\widehat{\bs{u}}}) \neq 0$ for all $ n$, \\$
 1+\gamma^2\E(u\lambda_u|Z=1)-
 \gamma^2\E\left(\bs{X}_{\m1}\lambda_u|Z=1\right)\E^{\m1}(\bs{X}_{\m1}^{T}\bs{X}_{\m1}|Z=1)\E(\bs{X}_{\m1}^{T}\lambda_u|Z=1)\neq 0,$
  where $u=-\bs{X}_{\m1}\bs{\delta}$ and $\lambda_u=\frac{\phi(u)}{\Phi(- u)}$, $\lambda$ denotes inverse Mills ratio;
 \item \label{ThA:5} $\widehat{\sigma}^2_{2.3\ldots p}\neq 0$, and at least one of $\widehat{\beta}_2 \neq 0$ or $\widehat{\sigma}_{1.2 \ldots p}^2 \neq 0$ for all $ n.$
 \end{enumerate}
 \begin{proof}[Proof of \autoref{theorem CAN}.]
$\widehat{\sigma}^2_{2.3\ldots p}\rightarrow_p \sigma^2_{2.3\ldots p} $ as an OLS estimator of $\sigma^2_{2.3\ldots p}$ based on $\{X_{2i}, \ldots, X_{pi}\}_{i=1}^{N}$ and from regularity assumptions \ref{ThA:1}, \ref{ThA:3}.

\textbf{Proof of the consistency of $\widehat{\sigma}_{1.2 \ldots p}^{2}$}.
Let $\bs{\xi}_{1s}$ denote an $n  \times 1$ vector of $\xi_1$ for complete cases. By the law of large numbers, the continuous mapping theorem and regularity assumptions \ref{ThA:1}, \ref{ThA:2},
\begin{flalign}
\widehat{\sigma}_{1.2 \ldots p, ols}^2&=\frac{1}{n-p}\bs{X}_{1s}^T(\bs{I}_n-\bs{X}_{\m1s}(\bs{X}_{\m1s}^T\bs{X}_{\m1s})^{\m1}\bs{X}_{\m1s}^T)\bs{X}_{1s}=\frac{\bs{\xi}_{1s}^T\bs{\xi}_{1s}}{n-p}-\frac{\bs{\xi}_{1s}^T\bs{X}_{\m1s}(\bs{X}_{\m1s}^T\bs{X}_{\m1s})^{\m1}\bs{X}_{\m1s}^T\bs{\xi}_{1s}}{n-p}  \nonumber &\\
& \rightarrow_p E(\xi_{1}^{2}|Z=1)-E(\xi_{1}\bs{X}_{\m1}|Z=1)\E^{\m1}(\bs{X}_{\m1}^{T} \bs{X}_{\m1}|Z=1)E(\xi_{1}\bs{X}^{T}_{\m1}|Z=1). \label{eq:thA1}&
\end{flalign}
Since for MDM A $\E\epsilon=0$, $\eta_1 \indep  (X_2,\ldots, X_p)$ and $\epsilon \indep ( X_2,\ldots, X_p,\eta_1),$
\begin{flalign}
\E(\xi_1^{2}|Z=1)&=\gamma^{2}\sigma_{1.2\ldots p}^{2}\text{E}(\eta_1^{2}|Z=1)+2\gamma\sigma_{1.2\ldots p}\text{E}(\eta_1\epsilon|Z=1)+\text{E}(\epsilon^{2}|Z=1)\nonumber & \\
& = \gamma^{2}\sigma_{1.2\ldots p}^{2}\E\left[\E(\eta_1^{2}|\bs{X}_{\m1},Z=1)|Z=1\right]+\E\epsilon^{2}=\gamma^{2}\sigma_{1.2\ldots p}^{2}\E(1+u\lambda_u|Z=1)+\sigma_{1.2\ldots p}^{2}(1-\gamma^{2}), \label{eq:thA2} &
\end{flalign}
\begin{flalign} 
 &\E(\xi_1\bs{X}_{\m1}|Z=1) = \E\left[\bs{X}_{\m1}\E(\gamma\sigma_{1.2\ldots p}\eta_1+\epsilon|\bs{X}_{\m1},Z=1)|Z=1\right]=  \gamma\sigma_{1.2\ldots p}\E(\bs{X}_{\m1}\lambda_u|Z=1) \label{eq:thA3}.&
\end{flalign}
The last equality in (\ref{eq:thA2}) follows from the expressions for the variance and the mean of truncated normal distribution (see \citet{heckman1979sample}):
$\E\left[\E(\eta_1^{2}|\bs{X}_{\m1},Z=1)|Z=1\right]=\E\left[\E(\eta_1^{2}|\bs{X}_{\m1},\eta_1 > -\bs{X}_{\m1}\bs{\delta})|Z=1\right]=\E(1+u\lambda_{u}|Z=1).$
From (\ref{eq:thA1}), (\ref{eq:thA2}) and (\ref{eq:thA3}), 
 \begin{flalign} 
&\widehat{\sigma}_{1.2 \ldots p, ols}^2\rightarrow_{p} \sigma_{1.2\ldots p}^{2}\left[1+\gamma^2\E(u\lambda_u|Z=1)-\gamma^2\E\left(\bs{X}_{\m1}\lambda_u|Z=1\right)\E^{\m1}(\bs{X}_{\m1}^{T} \bs{X}_{\m1}|Z=1)\E(\bs{X}_{\m1}^{T}\lambda_u|Z=1)\right].&\label{eq:thA4}
\end{flalign}
Since $\bs{\widehat{\delta}}$ is the consistent estimator in probit regression, 
$\widehat{u}_i$ and $\lambda_{\widehat{u}_i}$ are consistent estimators of $u_i$ and $\lambda_{u_i}$.  $E\lambda^{2}_u < \infty $, since similar to \citet{birnbaum1942aninequality} it can be shown that $\forall x \in R$  $\lambda(x)\leq 2|x|+2$. From the regularity assumption \ref{ThA:1} by the law of large numbers and the continuous mapping theorem,
\begin{flalign}
&\bs{\widehat{u}}^{T}\bs{\lambda}_{\widehat{\bs{u}}}/(n-p)= (\bs{\widehat{u}}^{T}\bs{\lambda}_{\widehat{\bs{u}}} - \bs{u}^{T}\bs{\lambda}_{\bs{u}})/(n-p)+\bs{u}^{T}\bs{\lambda}_{\bs{u}}/(n-p)\rightarrow_{p} 0+\E(u\lambda_u|Z=1), \label{eq:thA5}&\\
&\bs{\lambda}_{\widehat{\bs{u}}}^{T}\bs{X}_{\m1s} (\bs{X}_{\m1s}^T\bs{X}_{\m1s})^{\m1} \bs{X}_{\m1s}^T \bs{\lambda}_{\widehat{\bs{u}}}/(n-p)\rightarrow_{p} \E\left(\bs{X}_{\m1}\lambda_u|Z=1\right)\E^{\m1}(\bs{X}_{\m1}^{T}\bs{X}_{\m1}|Z=1)\E(\bs{X}_{\m1}^{T}\lambda_u|Z=1).\label{eq:thA6}&
\end{flalign} 
From (\ref{eq:thA4}), (\ref{eq:thA5}), (\ref{eq:thA6}), regularity assumption \ref{ThA:4} and the continuous mapping theorem
\begin{flalign}& \widehat{\sigma}_{1.2\ldots p}^{2} \rightarrow_{p} \sigma_{1.2\ldots p}^{2}. \label{eq:thA7}&
\end{flalign}
\textbf{Consistency of $\widehat{\beta}_{2}$} follows from the law of large numbers, the continuous mapping theorem, regularity assumptions \ref{ThA:1}, \ref{ThA:2} and  (\ref{eq:thA3}):
\begin{flalign}
\widehat{\beta}_2=&(\bs{X}_{\m1s}^T\bs{X}_{\m1s})^{\m1}\bs{X}_{\m1s}^T[\bs{X}_{1s}-\gamma\widehat{\sigma}_{1.2 \ldots p}\bs{\lambda}_{\widehat{\bs{u}}}]e_{2} \nonumber \\
& \rightarrow_p \left[\E^{\m1}(\bs{X}_{\m1}^{T} \bs{X}_{\m1}|Z=1) \left(\E(\bs{X}_{\m1}^{T}\bs{X}_{\m1}\bs{\beta}|Z=1)+\E(\bs{X}_{\m1}^{T}\xi_1|Z=1)-\gamma\sigma_{1.2\ldots p}\E(\bs{X}_{\m1}^{T}\bs{\lambda}_{\bs{u}}|Z=1)\right)\right]_2=\beta_2, \label{eq:thA8}&
\end{flalign}
where $\bs{\beta}=(\beta_1, \ldots, \beta_p)^{T}.$

\textbf{Asymptotic normality of $\widehat{\beta}_{2}$} follows from Slutsky's and the multivariate central limit theorem,
\begin{flalign*}
\sqrt{n}(\widehat{\beta}_2-\beta_2)&=\sqrt{n}\left[(\bs{X}^T_{\m1s}\bs{X}_{\m1s})^{\m1}[\bs{X}^T_{\m1s}\bs{\xi}_{1s}-\gamma\widehat{\sigma}_{1.2 \ldots p}\bs{X}_{\m1s}^T \bs{\lambda}_{\widehat{\bs{u}}}]\right]_2&\\
&\rightarrow_{d}\mathcal{N}\left(0,\left[\E^{\m1}(\bs{X}_{\m1}^{T} \bs{X}_{\m1}|Z=1)\var(\bs{X}^T_{\m1}(\xi_{1}-\gamma\sigma_{1.2\ldots p}\lambda_u)|Z=1)\E^{\m1}(\bs{X}_{\m1}^{T} \bs{X}_{\m1}|Z=1)^{T}\right]_{22}\right).
\end{flalign*}
From the properties of truncated normal distribution,
\begin{flalign*} 
\var(\bs{X}^T_{\m1}(\xi_{1}-\gamma\sigma_{1.2\ldots p}\lambda_u)|Z=1)=&\E\left(\bs{X}^T_{\m1}\var(\xi_1-\gamma\sigma_{1.2\ldots p}\lambda_u|\bs{X}_{\m1},Z=1)\bs{X}_{\m1}|Z=1\right)&\\
&+\var\left(\E(\bs{X}^T_{\m1}\bs{X}_{\m1}|Z=1)\bs{\beta} \right)&\\
= &\E\left(\bs{X}^T_{\m1} \left[\gamma^{2}\sigma_{1.2\ldots p}^{2}\var(\eta_1|\bs{X}_{\m1},Z=1) + \var(\epsilon|\bs{X}_{\m1},Z=1)\right]\bs{X}_{\m1}|Z=1\right)&\\
 = &\E\left(\bs{X}^T_{\m1} \left[\gamma^{2}\sigma_{1.2\ldots p}^{2}(1+u\lambda_u-\lambda_u^2)+ \sigma_{1.2\ldots p}^2 (1-\gamma^{2})\right]\bs{X}_{\m1}|Z=1\right)&\\
= & \sigma_{1.2\ldots p}^{2}\E\left(\bs{X}^T_{\m1} \bs{X}_{\m1}\left[(1+\gamma^{2}u\lambda_u-\gamma^{2}\lambda_u^2)\right]|Z=1\right).&
\end{flalign*}
Therefore, 
\begin{flalign}
&\sqrt{n}(\widehat{\beta}_2-\beta_2) \rightarrow_{d} \mathcal{N}\left(0,\nu_{\beta_2}\right)\label{eq:thA9}\\
& \nu_{\beta_2}=\sigma_{1.2\ldots p}^{2}[\E^{\m1}(\bs{X}_{\m1}^{T}\bs{X}_{\m1}|Z=1)\E\left(\bs{X}^T_{\m1} \bs{X}_{\m1}\left[(1+\gamma^{2}u\lambda_u-\gamma^{2}\lambda_u^2)\right]|Z=1\right)\E^{\m1}(\bs{X}_{\m1}^{T}\bs{X}_{\m1}|Z=1)^{T}]_{22}.\nonumber &
\end{flalign}
From the consistency of $\widehat{\sigma}_{2.3\ldots p}^{2}$, (\ref{eq:thA7}), (\ref{eq:thA8}), regularity assumption \ref{ThA:5} and the continuous mapping theorem, $\widehat{\rho}_{\gamma} \rightarrow_p \rho$. Using additionally (\ref{eq:thA9}) and Slutsky's theorem, it follows that
\begin{flalign*}
&\sqrt{n}(\widehat{\rho}_{\gamma}-\rho) \rightarrow_d \mathcal{N}\left(0,\frac{\nu_{\beta_2}}{\beta^2_{2}+\sigma^2_{1.2\ldots p}/\sigma^2_{2.3\ldots p}} \right).&
\end{flalign*}
\textbf{Consistency of $\widehat{\text{se}}_{\widehat{\rho}_{\gamma}}$} follows from the law of large numbers and the continuous mapping theorem: 
\begin{flalign*}
&\widehat{\text{se}}_{\widehat{\rho}_{\gamma}}^{2}=\frac{\widehat{\sigma}_{1.2 \ldots p}^{2}(1+\gamma^{2}\bs{\widehat{u}}^{T}\bs{\lambda}_{\bs{\widehat{u}}}/n-\gamma^{2}\bs{\lambda}_{\bs{\widehat{u}}}^{T}\bs{\lambda}_{\bs{\widehat{u}}}/n) (\bs{X}_{\m1s}^T\bs{X}_{\m1s})_{22}^{\m1}}{\widehat{\beta}_2^2+\widehat{\sigma}_{1.2 \ldots p}^2/\widehat{\sigma}^2_{2.3\ldots p}}\rightarrow_p \frac{\nu_{\beta_2}}{\beta^2_{2}+\sigma^2_{1.2\ldots p}/\sigma^2_{2.3\ldots p}}.&
\end{flalign*}
Therefore, $$\sqrt{n}(\widehat{\rho}_{\gamma}-\rho)/\widehat{\text{se}}_{\widehat{\rho}_{\gamma}} \rightarrow_d \mathcal{N}(0, 1). $$
\end{proof}

\begin{proof}[Proof of Corollary \ref{corollary: coverage}]
Let denote $\Gamma=[\gamma_{min}, \gamma_{max}].$
\begin{flalign}
\text{pr}\left(\rho_0  \in UR(\rho,\Gamma, \alpha \right)&=\text{pr}\left(\rho_0  \in \bigcup\limits_{ \gamma \in \Gamma}\text{CI}(\rho, \gamma,\alpha) \right)= 1-\text{pr}\left(\rho_0  \notin \bigcup\limits_{ \gamma \in \Gamma}\text{CI}(\rho, \gamma,\alpha) \right) \nonumber\\
&=1-\text{pr}\left(\rho_0  \in \bigcap\limits_{ \gamma \in \Gamma} (-\infty,\widehat{\rho}_{\gamma}-c_{\frac{\alpha}{2}}\widehat{\text{se}}_{\widehat{\rho}_{\gamma}}) \cup (\widehat{\rho}_{\gamma}+c_{\frac{\alpha}{2}}\widehat{\text{se}}_{\widehat{\rho}_{\gamma}}, \infty ) \right)\nonumber\\ 
&=1-\text{pr}(\bigcap\limits_{\gamma \in \Gamma}\{\widehat{\rho}_{\gamma}-c_{\frac{\alpha*}{2}}\widehat{\text{se}}_{\widehat{\rho}_{\gamma}} > \rho_0\})- \text{pr}(\bigcap\limits_{\gamma \in \Gamma}\{\widehat{\rho}_{\gamma}+c_{\frac{\alpha*}{2}}\widehat{\text{se}}_{\widehat{\rho}_{\gamma}} < \rho_0\}) \nonumber&\\
&\geq  1-\text{pr}(\widehat{\rho}_{\gamma_0}-c_{\frac{\alpha*}{2}}\widehat{\text{se}}_{\widehat{\rho}_{\gamma_0}}> \rho_0)-\text{pr}(\widehat{\rho}_{\gamma_0}+c_{\frac{\alpha*}{2}}\widehat{\text{se}}_{\widehat{\rho}_{\gamma_0}} < \rho_0) \label{col1} &\\
&=\text{pr}(\widehat{\rho}_{\gamma_0}-c_{\frac{\alpha*}{2}}\widehat{\text{se}}_{\widehat{\rho}_{\gamma_0}}< \rho_0< \widehat{\rho}_{\gamma_0}+c_{\frac{\alpha*}{2}}\widehat{\text{se}}_{\widehat{\rho}_{\gamma_0}}) \rightarrow 1-\alpha,\  n\rightarrow \infty \nonumber&
\end{flalign}
from the definition of CI. (\ref{col1}) follows since $
\bigcap\limits_{\gamma \in \Gamma}\{\widehat{\rho}_{\gamma}-c_{\frac{\alpha*}{2}}\widehat{\text{se}}_{\widehat{\rho}_{\gamma}} > \rho_0\} \subset \{\widehat{\rho}_{\gamma_0}-c_{\frac{\alpha*}{2}}\widehat{\text{se}}_{\widehat{\rho}_{\gamma_0}}> \rho_0\}$ and $\bigcap\limits_{\gamma \in \Gamma}\{\widehat{\rho}_{\gamma}-c_{\frac{\alpha*}{2}}\widehat{\text{se}}_{\widehat{\rho}_{\gamma}} < \rho_0\} \subset \{\widehat{\rho}_{\gamma_0}+c_{\frac{\alpha*}{2}}\widehat{\text{se}}_{\widehat{\rho}_{\gamma_0}} < \rho_0\} $.

\end{proof}
 
\section{}
\label{app:B}
\begin{theorem} \label{theorem: CAN for rho mechanism B}

Let $\{X_{1i}, X_{2i}, \ldots, X_{pi},Z_i\}_{i=1}^{N}$  be a random sample from $(X_{1}, X_{2}, \ldots, X_{p},Z)$ for which MDM B holds. Under regularity assumptions:
 \begin{enumerate}
\item $EX_j^{2}<~\infty, \  j=1,\ldots,p;$ $\sigma^2_{2.3\ldots p}\neq 0$, $\sigma^2_{1.2\ldots p}\neq 0;$
 \item $ \det \bs{X}_{\m1s}^T\bs{X}_{\m1s}\neq0$, $ \det \E(\bs{X}_{\m1}^{T} \bs{X}_{\m1}|Z=1) \neq0,$ where $\bs{X}_{\m1s}$  is an $n \times p$ matrix of observed covariates $(1, X_2, \ldots, X_p)$ for complete cases, $\bs{X}_{\m1}=(1, X_2, \ldots, X_p)$ is a vector of random variables; 
 \item $\det\bs{X}_{\m12s}^T\bs{X}_{\m12s}\neq0, \  \det \E(\bs{X}_{\m12}^{T} \bs{X}_{\m12}) \neq0,$ where $\bs{X}_{\m12s}$ - $n \times (p-1)$ matrix of observed covariates $(1, X_3, \ldots, X_p)$ for complete cases and $\bs{X}_{\m12}=(1, X_3, \ldots, X_p)$ is a vector of random variables;
 \item  $1+\frac{\gamma^2}{n-p}(\bs{\widehat{u}}^{T}\bs{\lambda}_{\widehat{\bs{u}}}-\bs{\lambda}_{\widehat{\bs{u}}}^{T}\bs{X}_{\m1s} (\bs{X}_{\m1s}^T\bs{X}_{\m1s})^{\m1} \bs{X}_{\m1s}^T \bs{\lambda}_{\widehat{\bs{u}}}) \neq 0 $ for all $n$, \\$
 1+\gamma^2\E(u\lambda_u|Z=1)-\gamma^2\E\left(\bs{X}_{\m1}\lambda_u|Z=1\right)\E^{\m1}(\bs{X}_{\m1}^{T}\bs{X}_{\m1}|Z=1)\E(\bs{X}_{\m1}^{T}\lambda_u|Z=1)\neq 0,$
  where $u=-\bs{X}_{\m12}\bs{\delta}$ and $\lambda_u=\frac{\phi(u)}{\Phi(- u)}$, $\lambda$ denotes inverse Mills ratio;
 \item  $\widehat{\sigma}^2_{2.3\ldots p}\neq 0$, and at least one of $\widehat{\beta}_{2} \neq 0$ or $\widehat{\sigma}_{1.2\ldots p}^2 \neq 0$ for all $n;$
 \end{enumerate}
 $\widehat{\rho}_{\gamma}$ is a consistent estimator of $\rho$ and
$$\sqrt{n}(\widehat{\rho}_{\gamma}-\rho)/\widehat{\text{se}}_{\widehat{\rho}_{\gamma}} \rightarrow_d \mathcal{N}(0, 1), $$
where
\begin{align*}
&\widehat{\rho}_{\gamma}=\frac{\widehat{\beta}_{2}}{\sqrt{\widehat{\beta}_{2}^2+\widehat{\sigma}_{1.2\ldots p}^2/\widehat{\sigma}^2_{2.3\ldots p}}},\\&\widehat{\beta}_2=\widehat{\beta}_{2,ols}-\gamma\widehat{\sigma}_{1.2 \ldots p}\left[(\bs{X}_{\m1s}^T\bs{X}_{\m1s})^{\m1}\bs{X}_{\m1s}^T\bs{\lambda}_{\widehat{\bs{u}}}\right]_2,\\
&\widehat{\sigma}_{1.2 \ldots p}^2=\frac{\widehat{\sigma}_{1.2 \ldots p, ols}^2}{1+\gamma^2(\bs{\widehat{u}}^{T}\bs{\lambda}_{\widehat{\bs{u}}}-\bs{\lambda}_{\widehat{\bs{u}}}^{T}\bs{X}_{\m1s} (\bs{X}_{\m1s}^T\bs{X}_{\m1s})^{\m1} \bs{X}_{\m1s}^T \bs{\lambda}_{\widehat{\bs{u}}})/(n-p) },\\
&\widehat{\text{se}}_{\widehat{\rho}_{\gamma}}=\sqrt{\frac{\widehat{\sigma}_{1.2 \ldots p}^{2}(1+\gamma^{2}\bs{\widehat{u}}^{T}\bs{\lambda}_{\bs{\widehat{u}}}/n-\gamma^{2}\bs{\lambda}_{\bs{\widehat{u}}}^{T}\bs{\lambda}_{\bs{\widehat{u}}}/n) (\bs{X}_{\m1s}^T\bs{X}_{\m1s})_{22}^{\m1}}{\widehat{\beta}^2_2+\widehat{\sigma}^2_{1.2 \ldots p}/\widehat{\sigma}^2_{2.3\ldots p}}},
\end{align*}
and $\widehat{\sigma}^2_{2.3\ldots p}$ is an ordinary OLS estimator of $\sigma^2_{2.3\ldots p}$ based on complete cases $\{X_{2i}, \ldots, X_{pi}\}_{i=1}^{n}.$ $\bs{X}_{1s}=(X_{11}, \ldots, X_{1n})^{T}$ denotes an $n<N$ vector of observed $X_{1};$ $\bs{\widehat{u}}=(\widehat{u}_1, \ldots,\widehat{u}_n)^{T} =-\bs{X}_{\m12s}\widehat{\bs{\delta}}$, $\widehat{\bs{\delta}}$ is the maximum likelihood estimator of $\bs{\delta}$ in probit model for missingness in mechanism B based on full data $\{X_{3i}, \ldots, X_{pi},Z_i\}_{i=1}^{N}$, $\bs{\lambda_{\widehat{u}}}=(\lambda(\widehat{u}_1), \ldots, \lambda(\widehat{u}_n))^{T}$, $\lambda(\widehat{u}_i)=\frac{\phi(\widehat{u}_i)}{\Phi(-\widehat{u}_i)}$  for all $ i=1,\ldots, N$ denotes the inverse Mills ratio, $\phi$ and $\Phi$ are, respectively, the standard normal density and cumulative distribution functions. Also, $[v]_2$ denotes the second element of a vector $v.$  
 \begin{proof} Similar to the proof of \autoref{theorem CAN},
 \begin{align*}
&\E(\xi^2_{1}|Z=1)=\E\left[\E(\xi^2_{1}|\bs{X}_{\m12}, Z=1)|Z=1\right]=\E\left[\E\left((\gamma_{1}\sigma_{1.2\ldots p}\eta_{1}+\epsilon)^{2}|\bs{X}_{\m12}, \eta_1>\m\bs{X}_{\m12}\bs{\delta}\right)|\eta_1>\m\bs{X}_{\m12}\bs{\delta}\right]\\
&=\sigma_{1.2\ldots p}^{2}\E\left(1+\gamma^{2}u\lambda_{u}|Z=1\right),\\
&\E(\bs{X}_{\m1}\xi_1|Z=1)=\E\left[\bs{X}_{\m1}\E(\gamma\sigma_{1.2\ldots p}\eta_1+\epsilon|\bs{X}_{\m1},Z=1)|Z=1\right]=\gamma\sigma_{1.2\ldots p}\E(\bs{X}_{\m1}\lambda_u|Z=1).
\end{align*}
Let $\bs{\xi}_{2s}$ denote an $n$ vector of $\xi_2$ for complete cases. Since $\xi_2 \indep (X_3,\ldots, X_p, \eta_1)$ and $\eta_1 \indep (X_2,\ldots, X_p),$
\begin{align*}
&\E\widehat{\sigma}_{2.3 \ldots p}^2=\E\frac{1}{n-p}\bs{X}_{2s}^T(\bs{I}_n-\bs{X}_{\m12s}(\bs{X}_{\m12s}^T\bs{X}_{\m12s})^{\m1}\bs{X}_{\m12s}^T)\bs{X}_{2s}\\
&=\frac{1}{n-p}trE\left[\bs{M}_{2s}E\left[\bs{\xi}_{2s}\bs{\xi}_{2s}^T|\bs{X_{\m12s},\bs{\eta}_1>\m\bs{X}_{\m12}\bs{\delta}) }\right]\right]=\frac{1}{n-p}trE\left[\bs{M}_{2s}E\left[\bs{\xi}_{2s}\bs{\xi}_{2s}^T\right]\right]=\sigma_{2.3\ldots p}^{2}.
\end{align*}
The remaining parts of the proof follow the proof of theorem \autoref{theorem CAN}. 
 \end{proof}
 \end{theorem}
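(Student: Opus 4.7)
The plan is to mirror the argument of the proof of \autoref{theorem CAN} as closely as possible, since under MDM~B the estimators $\widehat{\beta}_2$ and $\widehat{\sigma}^2_{1.2\ldots p}$ have the same functional form as under MDM~A. The two structural differences are that the selection probit now involves only $\bs{X}_{\m12}=(1,X_3,\ldots,X_p)$ rather than $\bs{X}_{\m1}$, and that $X_2$ is itself subject to missingness so that $\widehat{\sigma}^2_{2.3\ldots p}$ must be estimated from complete cases. I would split the argument into the same four blocks: (a) consistency of $\widehat{\sigma}^2_{2.3\ldots p}$; (b) consistency of $\widehat{\sigma}^2_{1.2\ldots p}$ and $\widehat{\beta}_2$; (c) asymptotic normality of $\widehat{\beta}_2$; and (d) the delta-method and Slutsky conclusion for $\widehat{\rho}_\gamma$ and $\widehat{\text{se}}_{\widehat{\rho}_\gamma}$.

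Block (a) is the main new obstacle and relies precisely on the MDM~B-specific assumption $\xi_2\indep(X_3,\ldots,X_p,\eta_1)$. Because $Z=\mathbbm{1}(\bs{X}_{\m12}\bs{\delta}+\eta_1>0)$ depends only on $(\bs{X}_{\m12},\eta_1)$, this independence yields $\E(\bs{\xi}_{2s}\bs{\xi}_{2s}^{T}\,|\,\bs{X}_{\m12s},Z_1=\cdots=Z_n=1)=\sigma^2_{2.3\ldots p}\bs{I}_n$. Writing $\widehat{\sigma}^2_{2.3\ldots p}$ as a quadratic form in $\bs{\xi}_{2s}$ against the projector $\bs{M}_{2s}=\bs{I}_n-\bs{X}_{\m12s}(\bs{X}_{\m12s}^T\bs{X}_{\m12s})^{\m1}\bs{X}_{\m12s}^T$ of rank $n-(p-1)$, taking traces, and invoking the law of large numbers with regularity assumption~3, yields $\widehat{\sigma}^2_{2.3\ldots p}\rightarrow_p\sigma^2_{2.3\ldots p}$.

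For blocks (b) and (c), the key conditional-expectation identities $\E(\xi_1^{2}|Z=1)=\sigma_{1.2\ldots p}^{2}\E(1+\gamma^{2}u\lambda_u|Z=1)$ and $\E(\bs{X}_{\m1}\xi_1|Z=1)=\gamma\sigma_{1.2\ldots p}\E(\bs{X}_{\m1}\lambda_u|Z=1)$ with $u=-\bs{X}_{\m12}\bs{\delta}$ carry over: they follow from $\xi_1=\gamma\sigma_{1.2\ldots p}\eta_1+\epsilon$, the independences $\eta_1\indep(X_2,\ldots,X_p)$ and $\epsilon\indep(X_2,\ldots,X_p,\eta_1)$, and standard truncated-normal moments after conditioning on $\bs{X}_{\m12}$. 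The probit MLE $\widehat{\bs{\delta}}$ is $\sqrt{N}$-consistent since it is fit on the full sample $\{X_{3i},\ldots,X_{pi},Z_i\}$, and $\lambda$ is dominated by $2|x|+2$ so $\bs{\lambda}_{\widehat{\bs{u}}}$ is a consistent plug-in for $\bs{\lambda}_{\bs{u}}$; combined with assumptions~1, 2 and~4 this recycles the MDM~A argument to give consistency of $\widehat{\sigma}^2_{1.2\ldots p}$ and $\widehat{\beta}_2$. A multivariate CLT applied to $n^{\m1/2}\bs{X}_{\m1s}^T(\bs{\xi}_{1s}-\gamma\sigma_{1.2\ldots p}\bs{\lambda}_{\bs{u}})$ with Slutsky then gives asymptotic normality of $\widehat{\beta}_2$ with sandwich variance $\nu_{\beta_2}$ derived from the truncated-normal identity $\var(\eta_1|\bs{X}_{\m12},Z=1)=1+u\lambda_u-\lambda_u^{2}$.

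Block (d) is routine: the delta method applied to the smooth map $\rho=\beta_2/\sqrt{\beta_2^{2}+\sigma_{1.2\ldots p}^{2}/\sigma_{2.3\ldots p}^{2}}$, together with regularity assumption~5 and the continuous mapping theorem, gives $\widehat{\rho}_\gamma\rightarrow_p\rho$ and $\sqrt{n}(\widehat{\rho}_\gamma-\rho)\rightarrow_d\mathcal{N}\bigl(0,\nu_{\beta_2}/(\beta_2^{2}+\sigma_{1.2\ldots p}^{2}/\sigma_{2.3\ldots p}^{2})\bigr)$; consistency of $\widehat{\text{se}}_{\widehat{\rho}_\gamma}$ then follows by plug-in via the LLN applied to its explicit formula, and a final Slutsky step delivers the standardised normal limit. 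The only genuinely new step compared with \autoref{theorem CAN} is block (a), whose MDM~B-specific independence assumption is designed precisely to render the complete-cases OLS for $\sigma^2_{2.3\ldots p}$ unbiased; the rest is essentially a transcription of the MDM~A proof with $\bs{X}_{\m1}$ replaced by $\bs{X}_{\m12}$ inside the probit link.
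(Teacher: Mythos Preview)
Your proposal is correct and follows essentially the same route as the paper: both isolate the consistency of $\widehat{\sigma}^2_{2.3\ldots p}$ as the only genuinely new ingredient, handle it via the projector $\bs{M}_{2s}$ and the MDM~B independence $\xi_2\indep(X_3,\ldots,X_p,\eta_1)$, verify the same truncated-normal identities for $\E(\xi_1^2|Z=1)$ and $\E(\bs{X}_{\m1}\xi_1|Z=1)$ with $u=-\bs{X}_{\m12}\bs{\delta}$, and then defer the remainder to the MDM~A argument. Your write-up is in fact slightly more explicit than the paper's, which shows $\E\widehat{\sigma}^2_{2.3\ldots p}=\sigma^2_{2.3\ldots p}$ and leaves the passage to consistency implicit.
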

 A $(1-\alpha)100\%$ confidence interval for $\rho$ is
$
\text{CI}(\rho,\gamma,\alpha)=[\widehat{\rho}_{\gamma}-c_{\frac{\alpha}{2}}\widehat{\text{se}}_{\widehat{\rho}_{\gamma}};\widehat{\rho}_{\gamma}+c_{\frac{\alpha}{2}}\widehat{\text{se}}_{\widehat{\rho}_{\gamma}}].
$
Here $c_{\frac{\alpha}{2}}$ is the  $(1-\alpha)100\%$  percentile of the standard normal distribution.

\begin{corollary}
\label{corollary: coverage B}
Under the assumptions of \autoref{theorem: CAN for rho mechanism B}, if the true $\gamma_0 \in [\gamma_{min}, \gamma_{max}]$, the uncertainty region $\text{UR}(\rho,[\gamma_{min}, \gamma_{max}], \alpha )=\bigcup\limits_{ \gamma \in [\gamma_{min}, \gamma_{max}]} \text{CI}(\rho,\gamma,\alpha)$ has asymptotic coverage for $\rho_0$  of at least $(1-\alpha)100\%.$
\begin{proof}
Proof follows the same structure as the proof of \autoref{corollary: coverage}.
\end{proof}
\end{corollary}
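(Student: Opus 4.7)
The plan is to mirror the proof of \autoref{corollary: coverage} essentially verbatim, since the only difference between MDM A and MDM B at the level of this coverage argument is which estimator and asymptotic result are being invoked for a fixed value of $\gamma$. The structural argument is identical: at the true value $\gamma_0$, the confidence interval $\text{CI}(\rho,\gamma_0,\alpha)$ has pointwise asymptotic coverage $1-\alpha$, and when $\gamma_0 \in [\gamma_{min},\gamma_{max}]$ this interval is one of the intervals whose union forms the uncertainty region.

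Concretely, I would first observe the trivial set inclusion
\[
\text{CI}(\rho,\gamma_0,\alpha) \ \subset\ \bigcup_{\gamma\in[\gamma_{min},\gamma_{max}]}\text{CI}(\rho,\gamma,\alpha) \ =\ \text{UR}(\rho,[\gamma_{min},\gamma_{max}],\alpha),
\]
which holds deterministically on every sample because $\gamma_0 \in [\gamma_{min},\gamma_{max}]$. Monotonicity of probability then yields
\[
\text{pr}\bigl(\rho_0 \in \text{UR}(\rho,[\gamma_{min},\gamma_{max}],\alpha)\bigr) \ \geq\ \text{pr}\bigl(\rho_0 \in \text{CI}(\rho,\gamma_0,\alpha)\bigr).
\]
Equivalently, and more in the style used in the proof of \autoref{corollary: coverage}, I would rewrite the complement event as the intersection over $\gamma \in [\gamma_{min},\gamma_{max}]$ of the one-sided exclusion events $\{\widehat{\rho}_\gamma - c_{\alpha/2}\widehat{\text{se}}_{\widehat{\rho}_\gamma} > \rho_0\}$ and $\{\widehat{\rho}_\gamma + c_{\alpha/2}\widehat{\text{se}}_{\widehat{\rho}_\gamma} < \rho_0\}$, and bound each intersection by the single event corresponding to $\gamma = \gamma_0$.

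The second step is to invoke \autoref{theorem: CAN for rho mechanism B} at the true parameter $\gamma_0$: since $\widehat{\rho}_{\gamma_0}$ is consistent for $\rho_0$ and $\sqrt{n}(\widehat{\rho}_{\gamma_0}-\rho_0)/\widehat{\text{se}}_{\widehat{\rho}_{\gamma_0}} \rightarrow_d \mathcal{N}(0,1)$, the standard Wald argument gives $\text{pr}(\rho_0 \in \text{CI}(\rho,\gamma_0,\alpha)) \to 1-\alpha$ as $n\to\infty$. Combining with the inclusion above yields $\liminf_{n\to\infty} \text{pr}(\rho_0 \in \text{UR}) \geq 1-\alpha$, which is the required asymptotic coverage.

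There is no real obstacle here; the step that might look subtle but is in fact trivial is the reduction from the intersection over all $\gamma$ to the single event at $\gamma_0$, and it is justified exactly as in the displayed inequality \eqref{col1} of the proof of \autoref{corollary: coverage}. Accordingly, the proof in the paper need only state that the argument proceeds identically to \autoref{corollary: coverage}, with \autoref{theorem CAN} replaced by \autoref{theorem: CAN for rho mechanism B}.
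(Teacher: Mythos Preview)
Your proposal is correct and matches the paper's own argument: the paper simply states that the proof follows the same structure as the proof of \autoref{corollary: coverage}, which is exactly the set-inclusion / intersection-bounding argument you describe, with \autoref{theorem: CAN for rho mechanism B} supplying the asymptotic normality at $\gamma=\gamma_0$.
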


\begin{theorem} \label{theorem: CAN for rho mechanism C}
 Let $\{X_{1i}, X_{2i}, \ldots, X_{pi},Z_{1i},Z_{2i}\}_{i=1}^{N}, $ be a random sample from $(X_{1}, X_{2}, \ldots, X_{p},Z_1,Z_2)$   for which MDM C holds. Under regularity assumptions:
\begin{enumerate}
 \item  $EX_j^{2}<~\infty, \  j=1,\ldots,p;$ $\sigma^2_{2.3\ldots p}\neq 0$, $\sigma^2_{1.2\ldots p}\neq 0.$
 \item  $ \det \bs{X}_{\m1s}^T\bs{X}_{\m1s}\neq0$, $ \det \E(\bs{X}_{\m1}^{T} \bs{X}_{\m1}|Z_1=1, Z_2=1) \neq0$ where $\bs{X}_{\m1s}$  is an $n \times p$ matrix of observed covariates $(1, X_2, \ldots, X_p)$ for complete cases, $\bs{X}_{\m1}=(1, X_2, \ldots, X_p)$ is a vector of random variables;  
 \item  $\det\bs{X}_{\m12s_2}^T\bs{X}_{\m12s_2}\neq0$, $ \det \E(\bs{X}_{\m12}^{T} \bs{X}_{\m12}| Z_2=1) \neq0$, where $\bs{X}_{\m12s_2}$ - $n_2 \times (p-1)$ matrix of observed covariates $(1, X_{3}, \ldots, X_{p})$ for cases with observed $X_2$ and $\bs{X}_{\m12}=(1, X_3, \ldots, X_p)$ is a vector of random variables; 
 \item  $1+\frac{\gamma_1^2}{n-p}(\bs{\widehat{u}}^{T}\bs{\lambda}_{\widehat{\bs{u}}}-\bs{\lambda}_{\widehat{\bs{u}}}^{T}\bs{X}_{\m1s} (\bs{X}_{\m1s}^T\bs{X}_{\m1s})^{\m1} \bs{X}_{\m1s}^T \bs{\lambda}_{\widehat{\bs{u}}})\neq 0 $ for all $ n,$\\ $
  1+\gamma_1^2\E(u\lambda_u|Z_1=1, Z_2=1)-\gamma_1^2\E\left(\bs{X}_{\m1}\lambda_u|Z_1=1, Z_2=1\right) \times\\
  \times \E^{\m1}\left(\bs{X}_{\m1}^{T}\bs{X}_{\m1}|Z_1=1, Z_2=1\right)\E\left(\bs{X}_{\m1}^{T}\lambda_u|Z_1=1, Z_2=1\right)\neq 0,
 $ where $u=-\bs{X}_{\m12}\bs{\delta}_1$ and $\lambda_u=\frac{\phi(u)}{\Phi(- u)}$, $\lambda$ denotes inverse Mills ratio.
 \item  $1+\frac{\gamma_2^2}{n_2-p} (\bs{\widehat{w}}^{T}\bs{\lambda}_{\widehat{\bs{w}}}-\bs{\lambda}_{\widehat{\bs{w}}}^{T}\bs{X}_{\m12s_2} (\bs{X}_{\m12s_2}^T\bs{X}_{\m12s_2})^{\m1} \bs{X}_{\m12s_2}^T \bs{\lambda}_{\widehat{\bs{w}}})\neq 0$ for all $n_2,$\\ $
  1+\gamma_2^2\E(w\lambda_w| Z_2=1)- \gamma_2^2\E\left(\bs{X}_{\m1}\lambda_w|Z_2=1\right)
  \E^{\m1}\left(\bs{X}_{\m1}^{T}\bs{X}_{\m1}| Z_2=1\right)\E\left(\bs{X}_{\m1}^{T}\lambda_w|Z_2=1\right)\neq 0,
 $ where $w=-\bs{X}_{\m12}\bs{\delta}_2$ and $\lambda_w=\frac{\phi(w)}{\Phi(- w)};$
 \item $\widehat{\sigma}^2_{2.3\ldots p}\neq 0$ and at least one of $\widehat{\beta}_{2} \neq 0$ or $\widehat{\sigma}_{1.2\ldots p}^2 \neq 0$ for each sample;
 \end{enumerate}
  $\widehat{\rho}_{\gamma_1, \gamma_2}$ is a consistent estimator of $\rho$ and $$\sqrt{n}(\widehat{\rho}_{\gamma_1, \gamma_2}-\rho)/\widehat{\text{se}}_{\widehat{\rho}_{\gamma_1, \gamma_2}} \rightarrow_d \mathcal{N}(0, 1),$$
\begin{proof}
Similar to the proof of \autoref{theorem CAN},
 \begin{align*}
\E(\xi^2_{1}|Z_1=1, Z_2=1)&=\E\left[\E(\xi^2_{1}|\bs{X}_{\m12}, Z_1=1,Z_2=1)|Z_1=1,Z_2=1\right]\\
&=\E\left[\E\left((\gamma_{1}\sigma_{1.2\ldots p}\eta_{1}+\epsilon_{1})^{2}|\bs{X}_{\m12}, \eta_1>\m\bs{X}_{\m12}\bs{\delta}_1,\eta_2>\m\bs{X}_{\m12}\bs{\delta}_2\right)|Z_1=1,Z_2=1\right]\\
&=\sigma_{1.2\ldots p}^{2}\E\left(1+\gamma_1^{2}u\lambda_{u}|Z_1=1,Z_2=1\right),
\end{align*}
\begin{align*}
\E(\bs{X}_{\m1}\xi_1|Z_1=1, Z_2=1)&=\E\left[\bs{X}_{\m1}\E(\gamma_1\sigma_{1.2\ldots p}\eta_1+\epsilon_1|\bs{X}_{\m1},Z_1=1, Z_2=1)|Z_1=1, Z_2=1\right]\\
&=\gamma_1\sigma_{1.2\ldots p}\E(\bs{X}_{\m1}\lambda_u|Z_1=1, Z_2=1).
\end{align*}
The remaining parts of the proof follow the proof of \autoref{theorem CAN} with $u$ defined as $u=-\bs{X}_{\m12}\bs{\delta}_1$ and $\widehat{\bs{\delta}}_1$ as the maximum likelihood estimates of $\bs{\delta}_1$ in probit model based on full data $\{X_{3i}, \ldots, X_{pi},Z_{1i}\}_{i=~1}^{N}.$
The proof of consistency $\widehat{\sigma}_{2.3\ldots p}^{2}$ follows the one for the consistency of  $\widehat{\sigma}_{1.2\ldots p}^{2}$ in theorem \autoref{theorem CAN} given that $\E\epsilon_2=0,$ $\eta_2 \indep (X_3,\ldots, X_p)$ , $\epsilon_2 \indep (X_3,\ldots, X_p,\eta_2).$
\end{proof}
\end{theorem}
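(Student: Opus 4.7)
The plan is to follow the blueprint of \autoref{theorem CAN} almost line by line, with one extra ingredient: because under MDM C the covariate $X_2$ is itself MNAR, the estimator $\widehat{\sigma}^2_{2.3\ldots p}$ is no longer a plain OLS quantity but carries its own bias correction based on $\gamma_2$, $\widehat{\bs{w}}$ and $\bs{\lambda}_{\widehat{\bs{w}}}$, so its consistency requires a separate argument that parallels the consistency of $\widehat{\sigma}^2_{1.2\ldots p}$ in \autoref{theorem CAN}.

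I would first establish consistency of $\widehat{\sigma}^2_{2.3\ldots p}$ using the $n_2$ cases with observed $X_2$. Applying the law of large numbers and the continuous mapping theorem to $\widehat{\sigma}^2_{2.3\ldots p, ols}$ conditional on $\{Z_2=1\}$, and using the decomposition $\xi_2=\gamma_2\sigma_{2.3\ldots p}\eta_2+\epsilon_2$ together with $\eta_2\indep(X_3,\ldots,X_p)$ and $\epsilon_2\indep(X_3,\ldots,X_p,\eta_2)$, one obtains the truncated-normal identities $\E(\xi_2^{2}|Z_2=1)=\sigma_{2.3\ldots p}^{2}\E(1+\gamma_2^{2}w\lambda_w|Z_2=1)$ and $\E(\bs{X}_{\m12}\xi_2|Z_2=1)=\gamma_2\sigma_{2.3\ldots p}\E(\bs{X}_{\m12}\lambda_w|Z_2=1)$, structurally identical to the $\xi_1$ moments in the proof of \autoref{theorem CAN}. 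The bias-correcting denominator of $\widehat{\sigma}^2_{2.3\ldots p}$ then cancels the OLS bias exactly under regularity assumption~5, yielding $\widehat{\sigma}^2_{2.3\ldots p}\rightarrow_p\sigma^2_{2.3\ldots p}$.

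Second, consistency of $\widehat{\sigma}^2_{1.2\ldots p}$ and $\widehat{\beta}_2$, as well as the asymptotic normality of $\widehat{\beta}_2$, are obtained by repeating verbatim the corresponding steps of \autoref{theorem CAN} with all conditional expectations now taken with respect to the complete-cases event $\{Z_1=1,Z_2=1\}$. The two key conditional moments, already written out in the theorem's own preliminary calculation, are $\E(\xi_1^{2}|Z_1=1,Z_2=1)=\sigma_{1.2\ldots p}^{2}\E(1+\gamma_1^{2}u\lambda_{u}|Z_1=1,Z_2=1)$ and $\E(\bs{X}_{\m1}\xi_1|Z_1=1,Z_2=1)=\gamma_1\sigma_{1.2\ldots p}\E(\bs{X}_{\m1}\lambda_u|Z_1=1,Z_2=1)$. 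These identities rely on $\eta_1\indep\eta_2$ and on $\epsilon_1\indep(X_2,\ldots,X_p,\eta_1,\eta_2)$, which together force the extra truncation $\{\eta_2>-\bs{X}_{\m12}\bs{\delta}_2\}$ to factor out of the conditional moments of $\gamma_1\sigma_{1.2\ldots p}\eta_1+\epsilon_1$, leaving only the $\eta_1$-truncation to drive the inverse-Mills bias exactly as under MDM A. Asymptotic normality of $\widehat{\beta}_2$ then follows from the multivariate CLT applied to $n^{-1/2}\bs{X}^T_{\m1s}(\bs{\xi}_{1s}-\gamma_1\sigma_{1.2\ldots p}\bs{\lambda}_{\bs{u}})$ combined with Slutsky's theorem, producing a limit variance $\nu_{\beta_2}$ of the same form as in \autoref{theorem CAN}.

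Finally, combining these pieces via the continuous mapping theorem (regularity assumption~6 rules out a vanishing denominator in $\widehat{\rho}_{\gamma_1,\gamma_2}$) and Slutsky's theorem yields $\sqrt{n}(\widehat{\rho}_{\gamma_1,\gamma_2}-\rho)\rightarrow_d\mathcal{N}(0,\nu_{\beta_2}/(\beta_2^{2}+\sigma^2_{1.2\ldots p}/\sigma^2_{2.3\ldots p}))$, and an analogous LLN argument gives consistency of $\widehat{\text{se}}_{\widehat{\rho}_{\gamma_1,\gamma_2}}^{2}$, whence the studentized conclusion. The main obstacle is the careful verification that the double conditioning on $\{Z_1=1,Z_2=1\}$ does not corrupt the truncated-normal identities used for $\xi_1$; the independence structure built into MDM C is precisely what makes the second truncation separable, but this has to be checked explicitly rather than imported from the MDM A proof.
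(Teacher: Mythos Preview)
Your proposal is correct and mirrors the paper's own proof essentially line by line: the paper computes the two conditional moments $\E(\xi_1^{2}\mid Z_1=1,Z_2=1)$ and $\E(\bs{X}_{\m1}\xi_1\mid Z_1=1,Z_2=1)$ exactly as you describe, then defers the rest to the argument of \autoref{theorem CAN}, and handles $\widehat{\sigma}^2_{2.3\ldots p}$ by invoking the same consistency proof used for $\widehat{\sigma}^2_{1.2\ldots p}$ under the independence assumptions on $\eta_2$ and $\epsilon_2$. Your explicit remark that the independence $\eta_1\indep\eta_2$ and $\epsilon_1\indep(X_2,\ldots,X_p,\eta_1,\eta_2)$ is what makes the $\eta_2$-truncation separable is the one point the paper leaves implicit, so your version is slightly more careful there.
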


\begin{corollary}
\label{corollary: coverage C}
Under the assumptions of \autoref{theorem: CAN for rho mechanism C}, if the true $\gamma_{10} \in [\gamma_{1min}, \gamma_{1max}],$ $\gamma_{20} \in [\gamma_{2min}, \gamma_{2max}]$, the uncertainty region $\text{UR}(\rho,[\gamma_{1min}, \gamma_{1max}],[\gamma_{2min}, \gamma_{2max}], \alpha)$ has asymptotic coverage for $\rho_0$  of at least $(1-\alpha)100\%.$
\begin{proof}
Proof follows the same structure as the proof of \autoref{corollary: coverage}.
\end{proof}
\end{corollary}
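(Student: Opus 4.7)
The plan is to mimic exactly the argument used in the proof of \autoref{corollary: coverage}, now applied to the two-dimensional nuisance parameter $(\gamma_1,\gamma_2)$ rather than the scalar $\gamma$. Write $\Gamma=[\gamma_{1min},\gamma_{1max}]\times[\gamma_{2min},\gamma_{2max}]$ for the assumed parameter set, and let $(\gamma_{10},\gamma_{20})\in\Gamma$ denote the true value, with $\rho_0=\rho_{\gamma_{10},\gamma_{20}}$ the corresponding true partial correlation.

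First I would rewrite the non-coverage event as an intersection. By definition of the uncertainty region,
\begin{align*}
\mathrm{pr}\bigl(\rho_0\notin\mathrm{UR}(\rho,\Gamma,\alpha)\bigr)
&=\mathrm{pr}\!\left(\rho_0\notin\bigcup_{(\gamma_1,\gamma_2)\in\Gamma}\mathrm{CI}(\rho,\gamma_1,\gamma_2,\alpha)\right)\\
&=\mathrm{pr}\!\left(\bigcap_{(\gamma_1,\gamma_2)\in\Gamma}\bigl\{\rho_0\notin\mathrm{CI}(\rho,\gamma_1,\gamma_2,\alpha)\bigr\}\right).
\end{align*}
Splitting the non-coverage into the ``above'' and ``below'' sides and using a union bound,
\begin{align*}
\mathrm{pr}\bigl(\rho_0\notin\mathrm{UR}(\rho,\Gamma,\alpha)\bigr)
\le{}& \mathrm{pr}\!\left(\bigcap_{(\gamma_1,\gamma_2)\in\Gamma}\bigl\{\widehat{\rho}_{\gamma_1,\gamma_2}-c_{\alpha/2}\widehat{\mathrm{se}}_{\widehat{\rho}_{\gamma_1,\gamma_2}}>\rho_0\bigr\}\right)\\
&{}+\mathrm{pr}\!\left(\bigcap_{(\gamma_1,\gamma_2)\in\Gamma}\bigl\{\widehat{\rho}_{\gamma_1,\gamma_2}+c_{\alpha/2}\widehat{\mathrm{se}}_{\widehat{\rho}_{\gamma_1,\gamma_2}}<\rho_0\bigr\}\right).
\end{align*}

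Next I would drop to the single point $(\gamma_{10},\gamma_{20})$, which is legal because $(\gamma_{10},\gamma_{20})\in\Gamma$ and each intersection is contained in its $(\gamma_{10},\gamma_{20})$-indexed term. This gives the bound
\begin{align*}
\mathrm{pr}\bigl(\rho_0\notin\mathrm{UR}(\rho,\Gamma,\alpha)\bigr)
\le \mathrm{pr}\bigl(\rho_0\notin\mathrm{CI}(\rho,\gamma_{10},\gamma_{20},\alpha)\bigr).
\end{align*}
By \autoref{theorem: CAN for rho mechanism C} applied at the true value $(\gamma_{10},\gamma_{20})$, the estimator $\widehat{\rho}_{\gamma_{10},\gamma_{20}}$ is consistent for $\rho_0$ and asymptotically normal once centred and studentised, so the confidence interval $\mathrm{CI}(\rho,\gamma_{10},\gamma_{20},\alpha)$ has asymptotic coverage $1-\alpha$. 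Taking the limit $n\to\infty$ therefore yields $\mathrm{pr}\bigl(\rho_0\in\mathrm{UR}(\rho,\Gamma,\alpha)\bigr)\ge 1-\alpha$ asymptotically, as required.

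There is no genuine obstacle beyond carefully reusing the argument of \autoref{corollary: coverage}: the only change is that the indexing set is a rectangle in $\mathbb{R}^{2}$ instead of an interval in $\mathbb{R}$, which does not affect the inclusion $\bigcap_{(\gamma_1,\gamma_2)\in\Gamma}A_{\gamma_1,\gamma_2}\subseteq A_{\gamma_{10},\gamma_{20}}$. The only point worth noting is that the pointwise $(1-\alpha)$ coverage at $(\gamma_{10},\gamma_{20})$ is borrowed directly from \autoref{theorem: CAN for rho mechanism C}, so no new limit theorem needs to be established here.
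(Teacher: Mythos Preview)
Your proposal is correct and follows essentially the same route as the paper: the paper's own proof for this corollary simply says ``follows the same structure as the proof of \autoref{corollary: coverage},'' and you have reproduced that structure, replacing the scalar index $\gamma$ by the pair $(\gamma_1,\gamma_2)$ and invoking \autoref{theorem: CAN for rho mechanism C} for pointwise coverage at $(\gamma_{10},\gamma_{20})$. The clean inclusion $\mathrm{CI}(\rho,\gamma_{10},\gamma_{20},\alpha)\subseteq\mathrm{UR}(\rho,\Gamma,\alpha)$, which you state explicitly, is all that is really needed, and your observation that the rectangular index set changes nothing in the argument is exactly the point.
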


\end{appendices}
\section*{}
\bibliography{PartCorArticleBibliography}

\end{document}